\title{An Empirically Fast Las Vegas Algorithm for Algebraic Shifting}
\author[1]{Antony Della Vecchia}
\author[1,2]{Michael Joswig}
\author[1]{Fabian Lenzen}
\affiliation[1]{organization={Technische Universität Berlin},country={Germany}}
\affiliation[2]{organization={MPI MiS}, city={Leipzig}, country={Germany}}
\newcommand{\True}{\textup{\texttt{true}}}
\newcommand{\False}{\textup{\texttt{false}}}
\renewcommand\OSCAR{\textup{\texttt{OSCAR}}\xspace}
\renewcommand\Macaulay{\textup{\texttt{Macaulay2}}\xspace}
\newcommand\Rot[1]{\rotatebox{90}{#1}}%
\newcommand\CS[1]{\csname #1\endcsname}%
\newcommand\Tick[1]{\resizebox{!}{\heightof{t}}{\ifthenelse{#1=1}{\Checkmark}{\XSolidBrush}}}%
\newcommand\EarlyReturn[2][Lv]{\ifthenelse{\equal{\CS{#2#1maxLenBefore}}{n/a}}{*}{}}%
\newcommand\TimeOrDash[2][]{%
	\IfSubStr{\CS{#2Trials}}{>}%
	{{--}}%
	{\tablenum[table-auto-round,table-format=#1]{\CS{#2Time}}}%
}
\newcommand\FailedShift[2][Lv]{\IfSubStr{\CS{#2#1Trials}}{>}{${}^\dagger$}{}}%
\DeclareSIUnit{\noop}{\kern 0pt}%
\newcommand{\Mem}[1]{%
	\IfInteger{#1}{%
		\makebox[\widthof{\SI{10}{\giga\byte}}][r]{\SI[exponent-mode=engineering, exponent-to-prefix = true, round-mode=places, round-precision=0]{#1}{\noop}}
	}{#1}%
}%
\NewDocumentCommand{\FormatGroup}{m}{
	\tl_set:Nx \l_tmpa_tl {#1}
	\regex_replace_all:nnN {Z/1} {0} \l_tmpa_tl
	\regex_replace_all:nnN {Z/([0-9])} {Z/\1Z} \l_tmpa_tl
	\regex_replace_all:nnN {Z} {\c{mathbb}{Z}} \l_tmpa_tl
	\tl_use:N \l_tmpa_tl
}
\begin{document}
%\ifnum\pdfshellescape=1
%\else
%	\PackageError{fabian}{This document needs compilation with -shell-escape=1}
%\fi
	
\begin{keyword}
	Algebraic shifting\sep Bruhat decomposition\sep exterior algebra \sep linear algebra over fields of rational functions
\end{keyword}
\begin{abstract}
	Improved algorithms for computing (partial and full) exterior algebraic shifts of hypergraphs and simplicial complexes are presented.
	The main benefit is in positive characteristic.
	Experiments with an implementation in \OSCAR\ with various inputs such as bipartite graphs and triangulations of two and three dimensional manifolds
	show that the method considerably extends for which simplicial complexes exterior algebraic shifts can be computed in practice.
\end{abstract}
%\maketitle\pagestyle{fancy}

\maketitle

% !TeX root = jsc_version.tex

\section{Introduction} % 2 cols
Algebraic shifting comprises a variety of powerful techniques to replace a finite simplicial complex $K$ by a simpler complex which still retains key properties of $K$.
In particular, the f-vector, which counts the number of faces per dimension, and the Betti numbers (with respect to a fixed field $\EE$ of arbitrary characteristic) remain the same.
Algebraic shifting was introduced by Kalai \cite{Kalai84,Kalai02} and further developed by Björner--Kalai \cite{BjornerKalai:1988}, Aramova--Herzog \cite{Aramova+Herzog:2000}, Nevo \cite{Nevo:2005}, Murai--Hibi \cite{Murai+Hibi:2009} and others.
Here we are concerned with \emph{exterior shifting}, i.e., algebraic shifting in an exterior algebra over $\EE$.
In this setting, computing a shift seems to be straightforward.
It just requires finding the row echelon form of some matrix with coefficients in $\EE$.

The computational challenge is the field $\EE$ itself, which must be large enough to accommodate a certain \enquote{generic linear transformation}.
In practice, $\EE$ is a sufficiently large purely transcendental extension of a prime field $\FF$; i.e., $\EE=\FF(x_1,\ldots,x_k)$ is a field of rational functions.
So the bulk of the computational cost comes from the arithmetic in $\EE$.
In fact, no algorithm is known to compute the exterior shift of an arbitrary simplicial complex $K$ in polynomial time.
However, in characteristic zero there is a known way out.
A transformation matrix with coefficients picked at random in $\EE=\RR$ is sufficiently generic with high probability.
This yields a fast Monte Carlo algorithm which is standard \cite[\S2.6]{Kalai02};
an implementation based on \Macaulay \cite{M2} by Keehn can be found here \cite{M2ExtShifting}.
Kalai raised the question if there is a general deterministic or at least a Las Vegas polynomial time algorithm for exterior shifting \cite[\S2.6]{Kalai02}.
Recently, Keehn and Nevo \cite{KeehnNevo:2024} found deterministic polynomial algorithms in characteristic zero for triangulations of the torus, the real projective plane and the Klein bottle.

The purpose of this article is to describe algorithms for exterior shifting which are faster in practice than previous techniques.
Our best method is of \emph{Las Vegas type}, i.e., it is a randomized algorithm, which always gives the correct answer, if it terminates.
However, with low probability, it may run forever; in characteristic zero that probability is actually zero.
In contrast, a \emph{Monte Carlo algorithm} always terminates, but it may give a wrong answer with low probability.
All our algorithms are implemented in the new open source computer algebra system \OSCAR \cite{Oscar,OSCAR-book}.
While there is an advantage in all cases, the real benefit is in positive characteristic, where the standard Monte Carlo algorithm does not work.
The core idea rests on two independent contributions.
First, we find a transcendental extension of the prime field which is substantially smaller than the naive choice.
So, to fit a generic linear transformation, $\EE$ can be chosen as $\EE=\FF(x_1,\ldots,x_k)$ where $k=n^2$ and $n$ is the number of vertices of the complex $K$.
Exploiting the Bruhat decomposition of the general linear group $\GL(n,\EE)$ we can reduce $k$ to $n(n-1)/2$, which is a significant improvement.
This idea was developed in our recent work on partial algebraic shifting, which is algebraic shifting with not necessarily generic transformations \cite{VecchiaJoswigLenzen:2024}.
In this way we obtain interesting shifts in positive characteristic \cite[Example 44]{VecchiaJoswigLenzen:2024}, which were previously out of reach.
The relevant findings from that article are summarized in \cref{sec:shifting}.
Secondly, the actual linear algebra over $\EE$ can be organized in a way such that many redundant arithmetic operations are avoided.
In practice we obtain a reasonably fast procedure for deciding if a shift is actually correct (\cref{algo:verify} and \cref{thm:algo:a}); this is useful in every characteristic.
The latter strategy is explained in \cref{sec:shift-algo}.
This decision procedure directly yields a Las Vegas algorithm.
\Cref{sec:implementation} covers the implementation details.
One additional details is a tailored lazy evaluation scheme for the row echelon form (\cref{alg:lazy reduction}).
This approach allows us to avoid further arithmetic operations.
We report on computational experiments with \OSCAR\cite{Oscar,OSCAR-book} in \cref{sec:experiments}.

This article is the full version of an extended abstract that was published in the proceedings of ISSAC 2025\footfullcite{DellaVecchiaJoswigEtAl:2025}.
The work on this version of the text lead to a further substantial improvement of the algorithms and their implementations.
The difference is well visible in our experiments.
Notably, we can now compute shifts of certain simplicial complexes in dimensions 3 and 4, which were totally out of reach before; cf.\ \cref{sec:balls-and-spheres}.
Moreover, recently, Bulavka, Nevo and Peled studied algebraic shifting of random triangulations of surfaces \cite{BulavkaNevoPeled25}.
For various scenarios they showed that the homology of the shifts of these random complexes is particularly nice.
For these \enquote{homology lex segments} our algorithms also work very well, provided that the homology is concentrated in the top dimension.

\paragraph*{Acknowledgments}
  We are indebted to all developers of \OSCAR; without their numerous contributions our entire project would have been impossible.
  In particular, we thank Claus Fieker for enlightening discussions on linear algebra over fields of functions.
  We would also like to thank three anonymous ISSAC 2025 reviewers for their comments.

  The work of all authors is funded by the Deutsche Forschungsgemeinschaft (DFG, German Research Foundation).
  Specifically, ADV and MJ were supported by \enquote{MaRDI (Mathematical Research Data Initiative)} (DFG NFDI 29/1, project ID 460135501);
  MJ was supported by \enquote{Symbolic Tools in Mathematics and their Application} (DFG TRR 195, project ID 286237555);
  MJ and FL were supported by The Berlin Mathematics Research Center MATH$^+$ (DFG EXC-2046/1, project ID 390685689).

\section{Full and Partial Algebraic Shifting} % 2 cols
\label{sec:shifting}
For $n \in \NN$, denote by $\binom{[n]}{k}$ the set of $k$-element subsets of $[n] \coloneqq \{1,2,\dots,n\}$.
A \emph{$k$-uniform hypergraphs} on $n$ elements is a nonempty subset of $\binom{[n]}{k}$.
The elements of $\binom{[n]}{k}$ are called \emph{hyperedges}.
All our hypergraphs are uniform.

\begin{definition}
	\label{def:domination-order}
	Let $(S, \leq)$ be a totally ordered set.
	The total order $\leq$ on $S$ induces a partial order on $\binom{S}{k}$, called \emph{domination order} with respect to $\leq$, 
	given by $\{a_1 < \dotsb < a_k\} \leq \{b_1 < \dotsb < b_k \}$ if $a_i \leq b_i$ for all $i$.
\end{definition}

A hypergraph $S \subseteq \binom{[n]}{k}$ is \emph{shifted} if it is an initial set with respect to the domination order on $\binom{[n]}{k}$;
i.e., if $\sigma \in S$ and $\rho \leq \sigma$, then also $\rho \in S$.
Verifying if a given uniform hypergraph $S \subseteq \binom{[n]}{k}$ is shifted is immediate.
For $\sigma \in S$ and $i$, $j \in [n]$, let
\begin{equation}
	\label{eq:replace-vertex}
	\RepVert{\sigma}{i}{j} \coloneqq \begin{cases}
		\sigma \setminus \{i\} \cup \{j\} & \text{if $i \in \sigma j \notin \sigma$}, \\
		\sigma                            & \text{otherwise}.
	\end{cases}
\end{equation}
Now \cref{alg:deciding shiftedness} verifies of a given hypergraph $S$ is shifted in time $\mathcal{O}(k\abs{S})$.

\begin{algorithm}[bp]
	\caption{Deciding shiftedness of a uniform hypergraph.}
	\label{alg:deciding shiftedness}
	\KwIn{$S \subseteq \binom{[n]}{k}$}
	\KwOut{\True\enspace if $S$ is shifted, \False \enspace otherwise}
	\For{$\sigma \in S$}{
		\For{$i \in \sigma$, $i > 1$}{
			\lIf{$\RepVert{\sigma}{i}{i-1} \notin \sigma$}{%
				\Return\False
			}
		}
	}
	\Return\True
\end{algorithm}

The following construction assigns to every $S \subseteq \binom{[n]}{k}$ a shifted hypergraph $\Delta(S)$ that shares several combinatorial properties with $S$.
Let $\EE \supseteq \FF$ be a field extension and $g\in\GL(n,\EE)$ be an invertible matrix.

\begin{definition}
	\label{def:lex-order}
	Let $(S, \leq)$ be a totally ordered set.
	The \emph{lexicographic order} on $\binom{S}{k}$ is the total order
	given by $s \lex\leq t$ if $\min (s \symdiff t) \in s$ with the symmetric difference $\symdiff$.
	It refines the domination order from \cref{def:domination-order}.
\end{definition}

In particular, the natural total order on $[n]$ induces a lexicographic order $\lex\leq$ on $\binom{[n]}{k}$.
It allows us to identify $\binom{[n]}{k}$ with $\bigl[\binom{n}{k}\bigr]$.
Then the $k$th \emph{compound matrix} $g^{\wedge k}$ of $g$ is the $\binom{n}{k} \times \binom{n}{k}$-matrix
with $g^{\wedge k}_{\sigma\tau} = \det (g_{ij})_{i \in \sigma, j \in \tau}$ for $\sigma, \tau \in \binom{[n]}{k}$.
We denote the row and column of $g^{\wedge k}$ corresponding to $\sigma$ by $g^{\wedge k}_{\sigma*}$ and $g^{\wedge k}_{*\tau}$, respectively.
For $S \subseteq \binom{[n]}{k}$, we write $g^{\wedge S}$ for the $\abs{S} \times \binom{n}{k}$-submatrix of $g^{\wedge k}$ with rows indexed by $S$.

\begin{definition}\label{def:shift}
	The \emph{partial shift} of $S \subseteq \binom{[n]}{k}$ by $g \in \GL(n, \EE)$ is
	\[
		\Delta_g(S) \coloneqq \Set[\big]{ \sigma \in \tbinom{[n]}{k}; g^{\wedge S}_{*\sigma} \notin \Span_{\EE}[\big]{(g^{\wedge S}_{*\rho})_{\rho \lex< \sigma}} }.
	\]
\end{definition}

We call $g$ \emph{generic} if all entries of $g$ are algebraically independent over $\FF$.
In this case, $\Delta(S)\coloneqq \Delta_g(S)$ is shifted (called the \emph{full shift} of $S$ by $g$) and does not depend on $g$; see~\cite[Theorem 2.1]{Kalai90}.
Let $\EE = \FF(x_{ij} \mid 1 \leq i, j \leq n) \supset \FF$ be a field extension where the $x_{ij}$ are algebraically independent over $\FF$.
Then the matrix $\frX = (x_{ij})_{ij}$ is generic, so $\Delta(S) = \Delta_{\frX}(S)$.
Algebraic shifting can be considered with respect to any fixed total order on $\binom{[n]}{k}$ refining $\leq$; throughout we stick to $\lex\leq$.
\begin{remark}
	\label{rem:Monte Carlo}
	By~\cite[Theorem 2.1.6]{Kalai02} the full shift only depends on the characteristic of the field $\FF$.
	So it suffices to consider field extensions $\EE \supseteq \FF$ where $\FF$ is a prime field, and $\EE$ is the field of rational functions over $\FF$ with $n^2$ indeterminates.
	We omit the field from the notation $\Delta(S)$.
\end{remark}

\begin{remark}
	\label{thm:shift preserves cardinality}
	It is known that $\abs{\Delta_g(S)} = \abs{S}$; see~\cite[Theorem~3.1]{BjornerKalai:1988}.
	We stress that the genericity of $g$ is not essential for the proof.
\end{remark}

For $n \in \NN$, let $\SymmetricGroup{n}$ denote the symmetric group on $n$ letters.
For $w \in \SymmetricGroup{n}$, let
\[
	\inv w \coloneqq \Set{(i,j); i < j, i \cdot w > j \cdot w},
\]
be the set of inversions of $w$.
Define the $n \times n$-matrices $\frU(w)$ and $\frR(w)$
with entries in the field $\FF(x_{\inv w}) \coloneqq \FF(x_{ij} \mid (i,j) \in \inv w)$ by
\begin{equation}
	\label{eq:def u(w)}
	\frU(w) \coloneqq
	\begin{psmallmatrix}
		1 & u_{12}     & \cdots     & u_{1n}               \\
		  & \ddots[15] & \ddots[15] & \vdots               \\
		  &            & 1          & \!\! u_{n-1,n}\!\!\! \\
		  &            &            & 1
	\end{psmallmatrix}
	\ \text{with} \
	u_{ij} = \begin{cases*}
		x_{ij} & if $(i,j) \in \inv w$, \\
		0      & otherwise
	\end{cases*}
\end{equation}
and $\frR(r) \coloneqq \frU(w)w$.
\begin{definition}\label{def:shift:w}
  The \emph{partial shift} of $S \subseteq \binom{[n]}{k}$ by $w \in \SymmetricGroup{n}$ is
  $\Delta_{\frR(w)}(S)$.
\end{definition}
The full shift of a hypergraph is recovered as the partial shift with respect to the longest word $w_0$ of $\SymmetricGroup{n}$, seen as the Coxeter group of type A$_{n-1}$.

\begin{proposition}[{\cite[Proposition~11]{VecchiaJoswigLenzen:2024}}] %\cref{arxiv:prop:w0,arxiv:thm:partial shift by u(w),arxiv:thm:partial shift graph acyclic}
  \label{thm:arxiv}
  For $S \subseteq \binom{[n]}{k}$ we have $\Delta(S) = \Delta_{\frR(w_0)}(S)$.
  % \item
  % If $v \leq w$ in the right weak order of $\SymmetricGroup{n}$, then,
  % with respect to the lexicographic order on $\binom{\binom{[n]}{k}}{\abs{S}}$ induced by the one on $\binom{[n]}{k}$,
  % we have $\Delta_{\frR(v)}(S) \lex\leq \Delta_{\frR(w)}(S)$.\todo{We never use this statement; why to we report it?}
\end{proposition}
In other words, one can use the matrix $\frR(w_0)$ (which has $\frac{1}{2}n(n-1)$ generic entries)
instead of $\frX$ (which has $n^2$ generic entries) to compute $\Delta(S)$.
The matrix $\frR(w)$ contains the least number of generic entries necessary to compute the partial shift $\Delta_{\frR(w)}(S)$ for any $S$.
For a given $S$, it is possible to compute $\Delta_{\frR(w)}(S)$ with a matrix with even fewer generic entries,
as shown in the following lemma and the corollary below.
For $\sigma = \{i_1 < \dotsb < i_k\}$, let $\sgn(i_k,\sigma) = (-1)^i$.

\begin{lemma}
  \label{thm:facet substitution}
  Let $b \in \GL(n, \EE)$ be a matrix such that $b_{ij} = 0$ unless $\RepVert{\sigma}{i}{j} \in S$ for all $\sigma \in S$.
  Then $\Delta_{bg}(S) = \Delta_g(S)$ for all $g \in \GL(n, \EE)$.
\end{lemma}
\begin{proof}
  The assumption implies that $b$ must be a product of the elementary matrices
  \[
  	d^u(\lambda) = \begin{pNiceMatrix}[small,first-col,first-row]
  		  &   &        & u       &        &   \\
  		  & 1 &        &         &        &   \\
  		  &   & \ddots &         &        &   \\
  		u &   &        & \lambda &        &   \\
  		  &   &        &         & \ddots &   \\
  		  &   &        &         &        & 1
  	\end{pNiceMatrix},\
  	t^{ij}(\lambda) = \begin{pNiceMatrix}[small,first-col,first-row]
  		  &   &        & i       &        & j        &        &   \\
  		  & 1 &        &         &        &          &        &   \\
  		  &   & \ddots &         &        &          &        &   \\
  		i &   &        & 0       &        & \lambda  &        &   \\
  		  &   &        &         & \ddots &          &        &   \\
  		j &   &        & \lambda &        & 0        &        &   \\
  		  &   &        &         &        &          & \ddots &   \\
  		  &   &        &         &        &          &        & 1
  	\end{pNiceMatrix},\
  	l^{ij}(\lambda) = \begin{pNiceMatrix}[small,first-col,first-row]
  		  &   &        & i &        & j        &        &   \\
  		  & 1 &        &   &        &          &        &   \\
  		  &   & \ddots &   &        &          &        &   \\
  		i &   &        & 1 &        & \lambda  &        &   \\
  		  &   &        &   & \ddots &          &        &   \\
  		j &   &        &   &        & 1        &        &   \\
  		  &   &        &   &        &          & \ddots &   \\
  		  &   &        &   &        &          &        & 1
  	\end{pNiceMatrix}
  \]
  with $\RepVert{\sigma}{i}{j} \in S$ for all $\sigma \in S$.
  Hence, it suffices to show the statement for $b \in \{d^u(\lambda)$, $t^{ij}(1)$ and $l^{ij}(\lambda)\}$.
  We have
  \begin{align*}
  	(d^u(\lambda))^{\wedge k}    & = \textstyle \prod_{i \in \sigma \in \binom{[n]}{k}} d^\sigma(\lambda)                                                                                          \\
  	(t^{ij}(1))^{\wedge k}       & = \textstyle \prod_{i \in \sigma \in \binom{[n]}{k}} t^{\sigma, \RepVert{\sigma}{i}{j}}\bigl(\sgn(i,\sigma)\sgn(j,\RepVert{\sigma}{i}{j})\bigr) \\
  	(l^{ij}(\lambda))^{\wedge k} & = \textstyle \prod_{i \in \sigma \in \binom{[n]}{k}} l^{\sigma, \RepVert{\sigma}{i}{j}}(\lambda).
  \end{align*}
	We see that $(d^u(\lambda))^{\wedge k} \in \GL(\binom{n}{k},\EE)$ is diagonal.
	From the assumption on $i$, $j$, we get that $(t^{ij}(1))^{\wedge k}$ and $(l^{ij}(\lambda))^{\wedge k}$ have all its non-zero off-diagonal entries in rows and columns
	corresponding to $\sigma \in S$.
	This ensures that the second equality in
  \[
  	(bg)^{\wedge S} = b^{\wedge S} g^{\wedge k} = (b^{\wedge k})_{SS} \; g^{\wedge S}
 	\]
	holds.
	Hence, the columns of $(bg)^{\wedge S}$ have the same linear relations as those of $g^{\wedge S}$.
\end{proof}

For $v \in \EE^{\inv w}$, we write $\frU(w)(v)$ for the matrix obtained by putting in $v_{ij}$ for $x_{ij}$ in $\frU(w)$ for each $(i,j) \in \inv w$,
and analogously for $\frR(w)(v)$.
We obtain the following corollary from \cref{thm:facet substitution}:

\begin{corollary}
	\label{thm:left-invariance}
	Let $S \subseteq \binom{[n]}{k}$ and $w \in \SymmetricGroup{n}$.
	Let $v \in \EE^{\inv w}$ have entries $v_{ij} = 0$ if $\RepVert{\sigma}{i}{j} \in S$ for all $\sigma \in S$,
	and $v_{ij} = x_{ij}$ otherwise.
	Then we have $\Delta_{\frU(w)}(S) = \Delta_{\frU(w)(v)}(S)$. 
\end{corollary}
\begin{proof}
	We first show that for any unipotent upper triangular matrix $b \in \GL(n,\EE)$, 
	the matrix $\tilde{b}$ with entries
	\[
          \tilde{b}_{ij} = \begin{cases}
		0 & \text{if $\RepVert{\sigma}{j}{i} \in S$ for all $\sigma \in S$,}\\
		b_{ij} & \text{otherwise}
                           \end{cases}
        \]
	satisfies $\Delta_{\tilde{b} g}(S) = \Delta_{bg}(S)$ for all $g\in\GL(n,\EE)$.
	To see this, note that \cref{thm:facet substitution} implies that
	\[
		\tilde{b} = l^{12}(-b_{12}) \bigr[l^{13}(-b_{13}) l^{23}(-b_{23})\bigr] \dotsm \bigl[l^{1n}(-b_{1n})\dotsm l^{n-1,n}(-b_{n-1,n})\bigr] b.
	\]
	Now, to prove the statement, use this with $g = w$, $b = \frU(w)$ and $\tilde{b} = \frU(w)(v)$.
\end{proof}

In terms of examples we are mostly interested in a topological setting, where hypergraphs occur in the following way.
An \emph{abstract simplicial complex} $K$ on the vertex set $[n]$ is a nonempty set of nonempty subsets of $[n]$ which is closed with respect to taking subsets.
An element of $K$ is called a \emph{face}, and its dimension is the cardinality minus one.
The dimension of $K$ is the maximal dimension of its faces.
The faces of fixed dimension $q$ form a $(q{+}1)$-uniform hypergraph $K^{(q)}$.
A simplicial complex is \emph{shifted} if all those hypergraphs are shifted.
Conversely, for every $g \in \GL(n, \EE)$, the partially shifted uniform hypergraphs $\Delta_g(K^{(q)})$ for $q=0,\dotsc,\dim K$ again form a simplicial complex, denoted by $\Delta_g(K)$ \cite[288]{BjornerKalai:1988}%
\footnote{Scrutinizing the proof in \cite{BjornerKalai:1988} reveals that the standing assumption that the matrix $g$ is generic is never used.}.
As for hypergraphs, we write $\Delta(K)$ for $\Delta_g(K)$ if $g$ is generic.

\section{Shifting Algorithms} % 3 or 4 cols
\label{sec:shift-algo}

\begin{algorithm}[tbp]
	\caption{Computing $\Delta_{\frR(w)}(S)$ naively. Choosing $w = w_0$ yields $\Delta(S)$.}
	\label{alg:naive shift}
	\KwIn{$S \subseteq \binom{[n]}{k}$, $w \in \SymmetricGroup{n}$}
	\KwOut{$\Delta_{\frR(w)}(S)$}
	$m \gets {}$any row echelon form of $\frR(w)^{\wedge S}$\tcp*{matrix over $\FF(x_{\inv w})$}
	\Return $\Set{\sigma \in \binom{[n]}{k}; \text{$m_{*\sigma}$ contains a pivot}}$
\end{algorithm}

This section contains our most relevant algorithmic contributions.
Additionally, at the end we summarize other approaches, which are dedicated to special cases.
Our algorithms usually operate on uniform hypergraphs; however, \cref{algo:simplicial complex} exploits the stucture of a simplicial complex.

Of course, $\Delta(S)$ and $\Delta_{\frR(w)}(S)$ can be determined directly, by computing the row echelon form $m$ of $\frR(w)^{\wedge S}$, where $\frR(w) \in \GL(n,\EE)$ for $\EE = \FF(x_{\inv w})$.
From that row echelon form the hyperedges of $\Delta_{\frR(w)}(S)$ correspond to those columns of $m$ containing pivots; see \cref{alg:naive shift}.
However, calculating in this field of fractions is very expensive, whence experimenting with smaller fields and avoiding arithmetic operations are natural ideas.
One way of computing (full) shifts in characteristic zero, i.e., $\FF=\QQ$, is to shift with respect to a random matrix in $\EE=\RR$.
Such a matrix is generic almost surely.
This leads to the standard Monte Carlo algorithm, implemented, e.g., in \cite{M2ExtShifting}.
That method has two key disadvantages.
First, there is no way to certify the correctness of the output.
Secondly, this idea does not work if $\Char \FF$ is positive.
We address both issues.

\subsection{Verifying partial shifts}
Our first goal is a general procedure for verifying for a given hypergraph $S$ and matrix $u$ if $\Delta_{uw_0}(S) = \Delta_{\frR(w_0)}(S)$; this is \cref{algo:verify}.
We will then use that method to devise a Las Vegas algorithm for computing $\Delta(S)$; see \ref{sec:lv} below.
While our procedure does not provide an advantage in the worst case, in terms of practical computations the difference can be decisive.
The practical advantage arises from the fact that, in many cases, the number of necessary computations in $\EE$ is much lower than for computing $\Delta(S) = \Delta_{\frR(w_0)}(S)$ directly.

Actually, the method even works for arbitrary partial shifts $\Delta_{\frR(w)}(S)$ for $w \in \SymmetricGroup{n}$.
Therefore, it is natural to phrase the algorithm in a way where the input consists of a hypergraph $S$, a permutation $w$, and the candidate matrix $u$.
Additionally, \cref{algo:verify} accepts a list $D$ of hyperedges in $\binom{[n]}{k}$ known not to belong to $\Delta_{\frR(w)}(S)$, which will be skipped during the algorithm.
This will be used later when computing shifts of simplicial complexes; see \cref{algo:simplicial complex}.

\newcommand{\Ind}{\operatorname{ind}}
For a matrix $g$ and uniform hypergraphs $S, T \subseteq \binom{[n]}{k}$,
let $g^{\wedge S}_{*T} \coloneqq (g^{\wedge S}_{*\tau})_{\tau \in T}$.
For an $l \times \binom{n}{k}$-matrix $m$, let $\Ind m \coloneqq \Set{\sigma \in \binom{[n]}{k}; m_{*\sigma} \notin \Span{m_{*\rho}; \rho \lex< \sigma}}$.
Note that if $m$ is in row echelon form, one can read off $\Ind m$ directly.
The idea behind \cref{algo:verify} below is to compute $\Delta_{uw}(S)$ for a matrix $u \in \EE^{n\times n}$, which only involves computing a row echelon form over $\EE$ rather than $\FF(x_{\inv w})$,
and then to verify if $\Delta_{uw}(S) = \Delta_{\frR(w)}(S)$ without computing the entire row echelon form of $\frR(w)^{\wedge S}$.
\begin{algorithm}[tb]
	\caption{Verify if $u$ computes the partial shift of a uniform hypergraph $S$ w.r.t.\ some field extension $\EE \supseteq \FF$}
	\label{algo:verify}
	\LinesNumbered
	\KwIn{$S \subseteq \binom{[n]}{k}$, $w \in \SymmetricGroup{n}$, $u \in \EE^{n{\times}n}$ upper triangular unipotent,
        $D \subseteq \binom{[n]}{k} \setminus \Delta_{\frR(w)}(S)$ a possibly empty set of known non-faces.}
	\KwOut{\True\ if $\Delta_{\frR(w)}(S) = \Delta_{uw}(S)$; otherwise \False.}
	$S' \gets \Delta_{uw}(S)$\;
	\lIf{$w = w_0$ and $S'$ is not shifted}{\Return \False}                                              \label{algo:verify:exit-not-shifted}
  $T \gets \Set{\tau \in \binom{[n]}{k} ; \tau \lex\leq \lex\max S'} \setminus (S' \cup D)$\;          \label{algo:verify:T}%
  $C \gets \Set{\tau \in \binom{[n]}{k} ; \tau \lex\leq \lex\max T}$\;
  $r \gets \frR(w)$\;                                                                                  \label{algo:verify:r}
  \lForEach{$(i,j) \in \inv w$ with $\RepVert{\sigma}{j}{i}$ for all $\sigma \in S$}{$r_{ij} \gets 0$} \label{algo:verify:r-loop}
  $m \gets (r^{\wedge S})_{*C}$ \tcp*{$\FF(x_{\inv w})$-matrix}
  $m_{*,D} \gets 0$\;                                                                                  \label{algo:verify:m-columns}
  \lIf{$w = w_0$}{%
     	$m_{*,(T\setminus\min T)} \gets 0$ \tcp*[f]{\smash{\begin{varwidth}[t]{4cm}\raggedleft set of minima w.r.t.\ domination order\end{varwidth}}}
  }                                                                                                    \label{algo:verify:horizontal}
	\ForEach(\label{algo:verify:loop}){$\sigma \in T$ in lex order}{%
    \lIf{$ m_{*\sigma } \notin  \Span{m_{*\rho}; \rho \lex< \sigma}$ }{\Return \False}                 \label{algo:verify:exit}
	}
	\Return \True
\end{algorithm}
\begin{theorem}
  \label{thm:algo:a}
  Given $S \subseteq \binom{[n]}{k}$, $w \in \SymmetricGroup{n}$ and a unipotent matrix $u \in \GL(n, \EE)$,
  \cref{algo:verify} correctly decides if $\Delta_{\frR(w)}(S) = \Delta_{uw}(S)$.
\end{theorem}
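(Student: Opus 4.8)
The plan is to first isolate the one structural input we need, namely a genericity statement of the form $\Delta_{\frR(w)}(S) \lex\leq \Delta_{uw}(S)$, and then to verify that the case distinction in \cref{algo:a} decides exactly the equality $\Delta_{\frR(w)}(S) = \Delta_{uw}(S)$ using only that inequality, the equality of cardinalities, and the fact that an initial segment is $\lex$-minimal among all sets of its size. Throughout write $R \coloneqq \Delta_{\frR(w)}(S)$ and $S' \coloneqq \Delta_{uw}(S)$; recall from \cref{def:shift} that $\Delta_g(S) = \Ind(g^{\wedge S})$ for every invertible $g$, and from \cref{thm:shift preserves cardinality} that $\abs R = \abs{S'} = \abs S$. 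In the final return of step $(c)$ the \enquote{$\sigma$} is meant to be $\sigma_{\text{max}} = \lex\max S'$.

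\emph{The inequality $R \lex\leq S'$.} As $u$ is upper triangular unipotent, $uw$ lies in the Bruhat cell of $w$, so one can write $uw = u'wb$ with $b$ invertible upper triangular and $u'$ obtained from $\frU(w)$ by specializing its indeterminates to elements of $\EE$ (the Bruhat-cell description underlying \cite{VecchiaJoswigLenzen:2024}). I would first note that the partial shift is invariant under $g \mapsto gb$ with $b$ invertible upper triangular: the compound $b^{\wedge k}$ is again upper triangular with nonzero diagonal with respect to the domination order, hence with respect to $\lex$, so right multiplication by $b^{\wedge k}$ rescales the columns of $g^{\wedge S}$ and adds $\lex$-earlier columns to $\lex$-later ones — operations that leave $\Ind$ unchanged; thus $S' = \Delta_{u'w}(S)$. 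Now $\frR(w)^{\wedge S}$ has entries in $\FF[x_{\inv w}]$ and $(u'w)^{\wedge S}$ arises from it by substituting elements of $\EE$ for the $x_{ij}$. Recalling that for a matrix of full row rank $\abs S$ the set $\Ind$ is the $\lex$-smallest $\abs S$-element set of columns on which the corresponding maximal minor is nonzero (this is what greedy left-to-right pivoting computes), and that such a minor of $(u'w)^{\wedge S}$ can fail to vanish only if the corresponding minor of $\frR(w)^{\wedge S}$ is a nonzero polynomial, this $\lex$-smallest column set can only move up under specialization, which yields $R \lex\leq S'$. (Equivalently, this says $\frR(w)$ is generic among the matrices $uw$ with $u$ upper triangular unipotent, and the inequality may also be quoted from \cite{VecchiaJoswigLenzen:2024}.)

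\emph{Correctness of the case distinction.} Since $\sigma_{\text{max}} = \lex\max S'$, we have $S' \subseteq T \cup \{\sigma_{\text{max}}\}$, and $T \cup \{\sigma_{\text{max}}\}$ is the initial segment of length $\abs T + 1$; hence $\abs{S'} \leq \abs T + 1$. If equality holds (step $(b)$), then $S'$ equals this initial segment, which is $\lex$-minimal among all length-$\abs S$ subsets of $\binom{[n]}{k}$, so $S' \lex\leq R$; together with $R \lex\leq S'$ and $\abs R = \abs{S'}$ this forces $R = S'$, so returning \True\ is correct. Otherwise $\abs{S'} \leq \abs T$ and step $(c)$ computes a row echelon form $m$ of $(\frR(w)^{\wedge S})_{*T}$. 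Here $\Ind m = \Ind(\frR(w)^{\wedge S}) \cap T = R \cap T$: a row echelon form has the same $\Ind$ as the matrix it comes from, and for $\tau \in T$ the condition defining $\tau \in \Ind$ only refers to columns indexed by $\rho \lex< \tau$, all of which lie in the initial segment $T$. Since $S' \setminus \{\sigma_{\text{max}}\} = S' \cap T$, step $(c)$ returns \True\ precisely when $R \cap T = S' \cap T$. This certainly holds if $R = S'$. Conversely, if $R \cap T = S' \cap T$, then $\abs{R \setminus T} = \abs S - (\abs S - 1) = 1$, say $R \setminus T = \{\rho_0\}$ with $\rho_0 \lex\geq \sigma_{\text{max}}$; if $\rho_0 \lex> \sigma_{\text{max}}$ then $R$ and $S'$ coincide on $S' \cap T$ and differ only in $\{\sigma_{\text{max}}, \rho_0\}$, so $\min(R \symdiff S') = \sigma_{\text{max}} \in S' \setminus R$, i.e.\ $S' \lex< R$, contradicting $R \lex\leq S'$; hence $\rho_0 = \sigma_{\text{max}}$ and $R = S'$. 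So step $(c)$ returns \True\ if and only if $R = S'$, and the algorithm is correct in all cases.

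\emph{Main obstacle.} Everything after the inequality $R \lex\leq S'$ is elementary bookkeeping with the lexicographic order. The substantive point is establishing that inequality: reducing an arbitrary upper triangular unipotent $u$ to a specialization of $\frU(w)$ via the Bruhat cell of $w$ (equivalently, the invariance of the partial shift under right multiplication by invertible upper triangular matrices), and then the monotonicity of $\Ind$ under specialization of the polynomial matrix $\frR(w)^{\wedge S}$.
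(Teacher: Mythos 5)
Your proof is correct, and while it follows the same skeleton as the paper's argument (establish the key inequality $\Delta_{\frR(w)}(S) \lex\leq \Delta_{uw}(S)$, then do lexicographic bookkeeping around $\sigma_{\text{max}}$ and $T$), it differs in two substantive ways. First, the paper obtains the inequality by citing \cref{thm:concrete-lex-less} together with the one-line identification $uw = \frR(w)(v)$, which literally applies only when the off-diagonal support of $u$ is contained in $\inv w$ (as for the matrices produced in \cref{algo:lv}); you instead prove the inequality from scratch for an arbitrary unipotent $u$, by passing through the Bruhat cell ($uw = u'wb$ with $u'$ a specialization of $\frU(w)$), showing invariance of the partial shift under right multiplication by invertible upper triangular matrices via the triangularity of $b^{\wedge k}$ with respect to the domination (hence lex) order, and then using that $\Ind$ is the lex-least column set with nonvanishing maximal minor, which can only move lex-up under specialization of the polynomial matrix $\frR(w)^{\wedge S}$. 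This makes the argument self-contained and covers the full generality of the theorem statement. Second, your endgame is tighter than the paper's: the paper deduces from the lex inequality that no column $\tau \lex> \sigma_{\text{max}}$ can carry a pivot, a step that does not follow from the lex comparison of equal-size sets alone (lex-smaller does not imply smaller maximum); you avoid this entirely by showing directly that step $(c)$ tests $R \cap T = S' \cap T$ (using that $T$ is a lex-initial segment, so restricting columns does not change $\Ind$ there) and that this equality is equivalent to $R = S'$ given $R \lex\leq S'$ and $\abs{R} = \abs{S'}$, via the symmetric-difference characterization of the lex order. The handling of the early return in step $(b)$ (initial segments are lex-minimal among sets of their cardinality) matches the paper. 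In short: same strategy, but your route supplies proofs the paper delegates to \cite{VecchiaJoswigLenzen:2024} and repairs a small logical shortcut in the published argument.
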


The version of \cref{algo:verify} given here is a substantial improvement over the version from our ISSAC extended abstract.
In particular, we introduce the set $D \subseteq \binom{[n]}{k} \setminus \Delta_{\frR(w)}(S)$ of known non-faces as an additional parameter.
This is useful for computing shifts of simplicial complexes which are computed as a sequence of shifts of the hypergraphs formed by the faces of a fixed dimension.
By proceeding from lower to higher dimensions the information about non-faces can be passed on, for a considerable speed gain in practice; see \cref{sec:balls-and-spheres}.
To prove the above \lcnamecref{thm:algo:a}, we need the following.

\begin{lemma}
  \label{thm:concrete-lex-less}
  For every $S \subseteq \binom{[n]}{k}$, $w \in \SymmetricGroup{n}$ and $v \in \EE^{\inv w}$, we have
  $\Delta_{\frR(w)}(S) \lex\leq \Delta_{\frR(w)(v)}(S)$.
\end{lemma}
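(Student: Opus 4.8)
The plan is to compare $A \coloneqq \Delta_{\frR(w)}(S)$ with $B \coloneqq \Delta_{\frR(w)(v)}(S)$ columnwise, using that specializing the indeterminates $x_{ij}$ of $\frR(w)$ to the values $v_{ij}$ can only lower the ranks of submatrices. Write $g \coloneqq \frR(w)$ and $h \coloneqq \frR(w)(v) = \frU(w)(v)\,w$; by \cref{def:shift} we have $A = \Ind(g^{\wedge S})$ and $B = \Ind(h^{\wedge S})$, and $B$ is well defined since $h$, being a product of a unipotent matrix and a permutation matrix, lies in $\GL(n,\EE)$.

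The first step is a \emph{specialization inequality}: $\rk(h^{\wedge S}_{*\Sigma}) \le \rk(g^{\wedge S}_{*\Sigma})$ for every set $\Sigma \subseteq \binom{[n]}{k}$ of columns. Indeed, each off-diagonal entry of $\frU(w)$ is either $0$ or an indeterminate $x_{ij}$ with $(i,j) \in \inv w$, so all entries of $g = \frU(w)w$ — hence all entries of $g^{\wedge S}$, which are minors of $g$ — lie in the polynomial ring $R \coloneqq \FF[x_{ij} \mid (i,j) \in \inv w]$. The substitution $x_{ij} \mapsto v_{ij}$ extends to an $\FF$-algebra homomorphism $\Lambda \colon R \to \EE$; as compound matrix entries and minors are polynomial expressions in the matrix entries, $\Lambda$ sends $g^{\wedge S}$ entrywise to $h^{\wedge S}$ and each minor of $g^{\wedge S}_{*\Sigma}$ to the corresponding minor of $h^{\wedge S}_{*\Sigma}$. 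Since the rank of a matrix over a domain is the largest size of a nonvanishing minor and $\Lambda$ maps $0$ to $0$, the inequality follows.

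The second step uses the greedy description of $\Ind$: for any matrix $m$ over a field and any $\sigma$, the columns $m_{*\rho}$ with $\rho \in \Ind m$ and $\rho \lex< \sigma$ form a basis of $\Span{m_{*\rho}; \rho \lex< \sigma}$ — a routine induction along $\lex\leq$, since by definition every column admitted to $\Ind m$ is linearly independent of the earlier ones. If $A = B$ there is nothing to prove; otherwise put $\sigma \coloneqq \lex\min(A \symdiff B)$ and assume, towards a contradiction, that $\sigma \notin A$. Every $\rho \lex< \sigma$ lies in $A$ iff it lies in $B$, so $C \coloneqq \Set{\rho \lex< \sigma; \rho \in A} = \Set{\rho \lex< \sigma; \rho \in B}$, and in particular $C = \Set{\rho \in \Ind(g^{\wedge S}); \rho \lex< \sigma} = \Set{\rho \in \Ind(h^{\wedge S}); \rho \lex< \sigma}$. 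Applying the greedy description to $g^{\wedge S}$ and to $h^{\wedge S}$ yields $\rk(g^{\wedge S}_{*C}) = \rk(h^{\wedge S}_{*C}) = \abs{C}$, and that in each of the two matrices the columns indexed by $C$ span all columns indexed by $\rho \lex< \sigma$. Now $\sigma \notin A = \Ind(g^{\wedge S})$ gives $g^{\wedge S}_{*\sigma} \in \Span{g^{\wedge S}_{*\rho}; \rho \in C}$, hence $\rk(g^{\wedge S}_{*(C \cup \{\sigma\})}) = \abs{C}$, while $\sigma \in B = \Ind(h^{\wedge S})$ gives $h^{\wedge S}_{*\sigma} \notin \Span{h^{\wedge S}_{*\rho}; \rho \in C}$, hence $\rk(h^{\wedge S}_{*(C \cup \{\sigma\})}) = \abs{C} + 1$. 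This contradicts the specialization inequality, so $\sigma \in A \setminus B$; that is, $\lex\min(A \symdiff B) \in A$, which is exactly $\Delta_{\frR(w)}(S) \lex\leq \Delta_{\frR(w)(v)}(S)$.

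The one genuinely delicate point is the bookkeeping behind the specialization map — one must check that the relevant entries really are honest polynomials (no denominators), so that $\Lambda$ is defined and commutes with forming compound matrices and minors. Once that is in place, the remainder is just the standard principle that a generic rank dominates every specialized rank, together with the elementary greedy description of $\Ind$, and I do not anticipate further obstacles.
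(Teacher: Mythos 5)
Your central mechanism is the right one: the entries of $\frR(w)=\frU(w)w$ are honest polynomials in $\FF[x_{ij}\mid (i,j)\in\inv w]$, the substitution $x_{ij}\mapsto v_{ij}$ is an $\FF$-algebra homomorphism that commutes with taking compound matrices and minors, and therefore $\rk\bigl(\frR(w)(v)^{\wedge S}_{*\Sigma}\bigr)\leq\rk\bigl(\frR(w)^{\wedge S}_{*\Sigma}\bigr)$ for every column set $\Sigma$; the greedy description of $\Ind$ is also fine. The gap is in what you extract from this. You prove only the ``first disagreement'' statement: if $A\coloneqq\Delta_{\frR(w)}(S)\neq B\coloneqq\Delta_{\frR(w)(v)}(S)$, then $\lex\min(A\symdiff B)\in A$. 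But the inequality $A\lex\leq B$ is needed (and is used in the proof of \cref{thm:algo:a}) in the stronger componentwise sense: listing both hypergraphs in lex-increasing order, the $i$th element of $A$ is $\lex\leq$ the $i$th element of $B$; only this yields $\lex\max A\lex\leq\lex\max B$, which is exactly what justifies discarding all columns $\tau\lex>\sigma_\text{max}$ there. Your version does not imply that bound: for equal-cardinality families one can perfectly well have $\lex\min(A\symdiff B)\in A$ while $\lex\max A\lex>\lex\max B$ (take $A=\{\sigma_1,\sigma_5\}$ and $B=\{\sigma_2,\sigma_3\}$ for lex-ordered $k$-sets $\sigma_1\lex<\dotsb\lex<\sigma_5$). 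So as written the lemma you have established is too weak for its intended use.

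The repair stays entirely inside your argument. For $\tau\in\binom{[n]}{k}$ let $T_\tau\coloneqq\{\rho;\rho\lex\leq\tau\}$, and set $r(\tau)\coloneqq\rk\bigl(\frR(w)^{\wedge S}_{*T_\tau}\bigr)$ and $r'(\tau)\coloneqq\rk\bigl(\frR(w)(v)^{\wedge S}_{*T_\tau}\bigr)$. Your specialization inequality gives $r'\leq r$ pointwise; the $i$th lex-smallest element of $\Ind\bigl(\frR(w)^{\wedge S}\bigr)$ is the lex-least $\tau$ with $r(\tau)\geq i$, and likewise for $\frR(w)(v)$; and both families have cardinality $\abs{S}$ because both matrices are invertible (cf.\ \cref{thm:shift preserves cardinality}). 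This immediately gives the componentwise inequality, which in turn implies your symmetric-difference formulation. Note that the paper states the lemma without a written proof (it is the concrete-specialization companion of \cref{thm:arxiv} quoted from \cite{VecchiaJoswigLenzen:2024}), so the comparison here is against what the proof of \cref{thm:algo:a} actually requires; with the strengthening above, your approach is the expected one.
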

\begin{proof}
	We prove the statement inductively.
	For brevity, we write $\frR \coloneqq \frR(w)$ and $\tilde{\frR} \coloneqq \frR(w)(v)$.
	We have to show that $\Ind \frR \lex\leq \Ind \tilde{\frR}$.
	Assume that up to and excluding the $\sigma$th column, we have $\Ind \frR_{*,<\sigma} = \Ind \tilde{\frR}_{*,<\sigma}$.
	If $\frR_{*\sigma} \in \Span{\frR_{*,<\sigma}}$, then also $\tilde{\frR}_{*\sigma} \in \Span{\tilde{\frR}_{*,<\sigma}}$.
	Hence, $\Ind \frR_{*,\leq\sigma} = \Ind \frR_{*,<\sigma} = \Ind \tilde{\frR}_{*,<\sigma} = \Ind \tilde{\frR}_{*,\leq\sigma}$.
	If $\frR_{*\sigma} \notin \Span{\frR_{*,<\sigma}}$, we have $\Ind \frR_{*,\leq\sigma} = \Ind (\frR_{*,<\sigma}) \cup \{\sigma\}$,
	so $\Ind \frR \lex\leq \Ind\tilde{\frR}$.
\end{proof}

\begin{proof}[Proof of \cref{thm:algo:a}]
	Note that we have $uw = \frR(w)(v)$ for $v = (u_{ij})_{ij \in \inv w}$.
	Then \cref{thm:concrete-lex-less} implies that $\Delta_{\frR(w)}(S) \lex\leq \Delta_{uw}(S)$.
	Therefore, we know that each column $(\frR(w))^{\wedge S}_{*\tau}$ for $\tau \lex> \lex\max S'$ must lie in the span of columns left of it.
	In other words, no such column can contain a step in the row echelon form $m$.
	We may thus safely discard all such columns from $\frR(w)^{\wedge S}$.

  The columns of $\frR(w)(v)^{\wedge S}_{*S'}$ are linearly independent by definition of $S'$.
  Hence, also the columns of $\frR(w)^{\wedge S}_{*S'}$ must be linearly independent.
  To show that $S' = \Delta_{\frR(w)}(S)$, that is that $\Ind\frR(w)(v)^{\wedge S} = \Ind\frR(w)^{\wedge S}$,
  we need only show that each column in $\frR(w)^{\wedge S}$ corresponding to a $\tau \in T$ for $T = \Set{\tau \in \binom{[n]}{k} ; \tau \lex\leq \lex\max S'} \setminus (S' \cup D)$ lies in the span of the columns to the left.
  This is the step in line~\ref{algo:verify:exit}.
  The lines \ref{algo:verify:r}--\ref{algo:verify:m-columns} build the matrix $m$ and set certain entries or columns of it to zero.
  In the following, we justify that these changes do not alter the result.
  
  First, by \cref{thm:left-invariance}, setting entries of $\frR(w)$ to zero as done in line~\ref{algo:verify:r-loop}
  does not change the result.
  Secondly, taking for granted that $D$ does not contain hyperedges from $\Delta_{\frR(w)}(S)$,
  we know that $m_{*\tau}$ is contained in the span of columns left of it for every $\tau \in D$.
  Setting these columns to zero thus does not change the result.
%  Of course, the result may be wrong if $D$ is actually not contained in $\binom{[n]}{k} \setminus \Delta_{\frR(w)}(S)$.
  
  Lastly, the case $w = w_0$ allows for further improvements.
  Because we know that $\Delta_{\frR(w)}(S)$ is shifted,
  the algorithm may exit already in line~\ref{algo:verify:exit-not-shifted} if $S'$ is not shifted.
  Further, we can avoid some reductions for deciding line~\ref{algo:verify:exit} by removing all columns $m_{*\tau'}$ for $\tau' \geq \tau \in T$.
  That is, if we have shown that $\tau$ lies in the span of the columns left of it (i.e., $\tau$ is a non-face of the shifted complex),
  we already know from shiftedness that the same is true for every $\tau' > \tau$.
  Consequently, setting those columns $\tau'$ to zero does not change the result of the computation.
  Phrased differently, the verification at line~\ref{algo:verify:exit} needs to be done only for $\tau \in \min T$, where $\min T$ is the set of minima with respect to the domination order,
  and we can safely set all columns of $m$ corresponding to non-minima of $T$ to zero, which justifies line~\ref{algo:verify:horizontal}.
\end{proof}

\begin{remark}
	We implement the step in \cref{algo:verify}, line~\ref{algo:verify:exit} by computing a row echelon form of $m$.
	Note that if $T = \emptyset$, then $m$ has no columns, so the row echelon form of $m$ is trivial.
	This can leads to drastic speedups; see \cref{sec:surfaces}.
	We stress that \cref{algo:verify} is transparent with respect to how line~\ref{algo:verify:exit} is decided.
	An approach using a different algorithm for computing row echelon forms is described and evaluated in \cref{sec:fabian's reduction,subsubsec:lazy}.
	Investigating the interaction of our shifting algorithm with the various heuristics involved in computing row echelon forms over fraction fields
	is beyond the capacities of this paper;
	but see \cref{rmk:increased-time} for an example where restricting to a \emph{subset} of columns of a matrix \emph{increases} the time needed to compute the row echelon form.
\end{remark}

\begin{remark}
  The definition of the matrix $\frR(w)$ in \cref{algo:verify} implicitly uses the fixed elements $\frX$ from \cref{eq:def u(w)}.
\end{remark}

\begin{remark}
  A variant of \cref{algo:verify} can be used to decide if a given hypergraph $U$ is the $w$-shift of another hypergraph $S$.
  Namely, given $U, S \subseteq \binom{[n]}{k}$ and $w \in \SymmetricGroup{n}$ one can check if $U = \Delta_{\frR(w)}(S)$
  by an algorithm obtained from \cref{algo:verify} by replacing ``$\setminus (S' \cup D)$''  in line \ref{algo:verify:T} with ``$\setminus D$''.
  To motivate this, observe that we cannot invoke \cref{thm:concrete-lex-less} in this setting.
  Consequently, in contrast to \cref{algo:verify:exit} for $w=w_0$, we need to check more columns for deciding if there exists some matrix $u$ such that $U = \Delta_{uw}(S)$.
\end{remark}

\usetikzlibrary{calc,decorations.pathmorphing,shapes}

\newcommand\xleadsto[2][]{%
	\mathrel{%
		\begin{tikzpicture}[baseline={($(sup.south) + (0,-0.5ex) $)}]
			\begin{scope}[local bounding box=b]
				\node[inner sep=.5ex] (sup) {$\scriptstyle #2$};
				\node[inner sep=.5ex, anchor=north] (sub) at (sup.south) {$\scriptstyle #1$};
			\end{scope}
			\coordinate (n1) at (sup.south -| b.west);
			\coordinate (n2) at (sup.south -| b.east);
			\path[draw,<-,decorate, decoration={snake,amplitude=0.9pt,segment length=2mm,pre=lineto,pre length=4pt}] ($(n2)+(.5ex,0)$) -- (n1);
		\end{tikzpicture}%
	}%
}

We study our \cref{algo:verify} via a pair of examples.
To simplify the notation, in the sequel, we write $i_1\cdots i_k$ as shorthand for the hyperedge  $\{i_1, \dotsc, i_k\}$.
% FF = GF(2)
%u1 = FF[1 1 0 1; 0 1 1 0; 0 0 1 0; 0 0 0 1]
%u2 = FF[1 0 0 0; 0 1 0 0; 0 0 1 0; 0 0 0 1]
%W = weyl_group(:A, 3)
%s1,s2,s3 = gens(W)
%w = permutation_matrix(FF, s3*s2*s1)
%S = UniformHypergraph(4,2,[[1,3],[1,4],[2,3],[2,4]])
%S′1 = exterior_shift2(S, u1*w)
%S′2 = exterior_shift2(S, u2*w)
%Δ = exterior_shift2(S, w)
\begin{example}
	\label{ex:algo:a}
	\colorlet{omittable}{red} % S is stable under 4->3
	Let $S = \{13, 14, 23, 24\} \in \binom{[4]}{2}$, $w = (1 \enspace 2 \enspace 3 \enspace 4)$, $\EE=\FF=\GF{2}$, and
	\[
		u = \begin{psmallmatrix}
			1 & 0 & 0 & 0 \\
			0 & 1 & 0 & 0 \\
			0 & 0 & 1 & 1 \\
			0 & 0 & 0 & 1
		\end{psmallmatrix},
		\qquad
		u' = \begin{psmallmatrix}
			1 & 0 & 0 & 0 \\
			0 & 1 & 0 & 1 \\
			0 & 0 & 1 & 1 \\
			0 & 0 & 0 & 1
		\end{psmallmatrix}.
	\]
	For the matrix $u$, \cref{algo:verify} gives $S' = \Delta_{uw}(S) = \Set{12, 13, 14, 34}$,
	$T = \{23, 24\}$, and $C = \{12, 13, 14, 23, 24\}$.
	Let us ignore line~\ref{algo:verify:r-loop} of \cref{algo:verify} for a moment.
	From the row echelon form
	\begin{equation}
		\label{eq:ex:algo:a:1}
		\begin{psmallmatrix}
			x_{34} & 0      & x_{14} & 0 & 1      \\
			0      & x_{34} & x_{24} & 0 & 0      \\
			0      & 0      & x_{14} & 0 & 1      \\
			0      & 0      & 0      & 0 & x_{24}
		\end{psmallmatrix}
		\quad\text{of}\quad
		m = \frR(w)^{\wedge S}_{*C} = \begin{psmallmatrix}
			\underline{x_{34}} & 0                  & x_{14} & 0 & 1 \\
			1                  & 0                  & 0      & 0 & 0 \\
			0                  & \underline{x_{34}} & x_{24} & 0 & 0 \\
			0                  & 1                  & 0      & 0 & 0
		\end{psmallmatrix},
	\end{equation}
	we see that \cref{algo:verify} returns \False\ in line~\ref{algo:verify:exit} for $\sigma = 24$.
	On the other hand, for the matrix $u'$, the algorithm computes
	$S' = \Delta_{u'w}(S) = \Set{12, 13, 14, 24}$, $T = \Set{23}$ and $C = \Set{12, 13, 14, 23}$.
	From the row echelon form
	\begin{equation}
		\label{eq:ex:algo:a:2}
		\begin{psmallmatrix}
			x_{34} & 0      & x_{14} & 0 \\
			0      & x_{34} & x_{24} & 0 \\
			0      & 0      & x_{14} & 0 \\
			0      & 0      & 0      & 0
		\end{psmallmatrix}
		\quad\text{of}\quad
		m = \frR(w)^{\wedge S}_{*C} = \begin{psmallmatrix}
			\underline{x_{34}} & 0                  & x_{14} & 0 \\
			1                  & 0                  & 0      & 0 \\
			0                  & \underline{x_{34}} & x_{24} & 0 \\
			0                  & 1                  & 0      & 0
		\end{psmallmatrix},
	\end{equation}
	we see that \cref{algo:verify} returns \True.
	Indeed, $S' = \Delta_{\frR(w)}(S)$.
	Lastly, recall that with knowledge of $S$, \cref{thm:left-invariance} allows to set certain entries of $m$ to zero, as done by line~\ref{algo:verify:r-loop}.
	In the situation of this example, we can drop the underlined entries $x_{ij}$ in $\frR(w)$ without changing the result (of course, the row echelon form will change, but not the set $\Ind m$).
\end{example}

\subsection{A Las Vegas algorithm for partial shifting}
\label{sec:lv}
\begin{algorithm}[bp]
	\caption{Las Vegas algorithm for computing partial shifts of uniform hypergraphs over some field extension $\EE \supseteq \FF$}
	\label{algo:lv}
	\KwIn{$S\subseteq\binom{[n]}{k}$, $w \in \SymmetricGroup{n}$}
	\KwOut{$\Delta_{\frR(w)}(S)$}
	\DoUntil{\cref{algo:verify} returns \True\enspace on $S$, $u$ and $w$}{
		\nlset{$(*)$}\label{algo:lv:u}
		$u \gets {}$matrix with $u_{ij} =
		\smash[t]{
			\begin{cases}
				\text{random value in $\EE$} & \text{if $(i,j) \in \inv w$,}\\
				1 & \text{if $i = j$,}\\
				0 & \text{otherwise}
			\end{cases}
		}$
	}
	\Return $\Delta_{uw}(S)$
\end{algorithm}
\Cref{algo:verify} lends itself readily to the Las Vegas algorithm for computing $\Delta_{\frR(w)}(S)$ listed in \cref{algo:lv}.
%\begin{remark}
%	\label{rmk:algebraic independence RR}
	Note that if $\FF = \QQ$ and $\EE = \RR$, then the $u_{ij}$ picked in \cref{algo:lv}, line~\ref{algo:lv:u}
	will be algebraically independent over $\FF$ almost surely.
	In particular, \cref{algo:lv} terminates almost surely in this case.
%\end{remark}

Of course, the Las Vegas algorithm does not need to terminate in general.
In particular, the field $\EE$ may be chosen too small to fit a matrix which is generic enough.
This can be observed in the following example.

\begin{example}
	\label{ex:not every generic shift is concrete}
	Let $S = \{124, 125, 236, 245, 256, 345, 456\}$, $\FF = \GF{2}$.
	Then $\Delta(S) = \Delta_{\frR(w_0)}(S) = \{123, 124, 125, 126, 134, 135, 136\}$, but no $g \in \GL(n, \GF{2})$ satisfies $\Delta(S) = \Delta_g(S)$.
	Over $\GF{4}$, in contrast, such $g$ do exist; for example,
	\[
		\begin{psmallmatrix}
			0 & 0 & 0 & 1 & \alpha+1 & 1 \\
			1 & \alpha & 0 & 0 & 1   & 0 \\
			0 & 0 & 0 & 1 & 0   & 0 \\
			0 & 0 & 1 & 0 & 0   & 0 \\
			1 & 1 & 0 & 0 & 0   & 0 \\
			1 & 0 & 0 & 0 & 0   & 0
		\end{psmallmatrix}
	\]
	where $\GF{4} = \GF{2}[\alpha]/(\alpha^2+1)$.
	This example is has the smallest dimension, number of vertices and number of hyperedges with this property.
	
	The example goes beyond our ISSAC extended abstract, which provides a simpler example for a hypergraph $S$ such that no $\GF{2}$-matrix $g$ satisfies $\Delta_{gw}(S) = \Delta_w(S)$ for a certain $w < w_0$.
	The example has been found using an exhaustive search with our implementation in the \OSCAR\ computer algebra system \cite{Oscar,OSCAR-book}.
	Of course, one has to use the deterministic \cref{alg:naive shift}, rather than the Las Vegas algorithm~\ref{algo:lv},
	to find and confirm such an example.
	We note that using a naive implementation using the matrix $\frX$, rather than $\frR(w_0)$,
	it goes beyond realistic computational capacities to find this example.
	% Evidence: see examples/not_every_generic_shift_is_concrete.jl
\end{example}

\subsection{Shifting simplicial complexes}
So far, we have only dealt with algebraic shifting of uniform hypergraphs.
If $K$ is a simplicial complex on $n$ vertices, then the set $K^{(q)}$ of $q$-dimensional simplices of $K$ is a $(q+1)$-uniform hypergraph for every $q$.
For every $g \in \GL(n,\EE)$, their shifts $\Delta_g(K^{(q)})$ form a simplicial complex again, denoted by $\Delta_g(K)$ \cite[Propsition~35]{VecchiaJoswigLenzen:2024}.
Therefore, $\Delta_g(K)$ is determined by the $\Delta_g(K^{(q)})$ for the $K^{(q)}$ that contain at least one facet of $K$.

When computing the partial shift of a simplicial complex $K$ we may take advantage of the fact that $\Delta_g(K)$ is a simplicial complex.
Since the shifting of each $K^{(q)}$ is done independently, we may choose the order in which we compute the partial shifts.
Namely, computing $\Delta_g(K^{(q)})$ before $\Delta_g(K^{(q+1)})$, we know that every coface $\tau$ of non-face $\sigma \in \binom{[n]}{q+1} \setminus \Delta_g(K^{(q)})$ is a non-face of $\Delta_g(K^{(q+1)})$.
Thus, we do not have to do any calculations in \cref{algo:verify} for the column corresponding to $\tau$.
The set of known such non-faces is passed to \cref{algo:verify} via the input $D$;
see \cref{algo:simplicial complex}, which can be used, mutatis mutandis, 
together with \cref{algo:lv} for obtaining a Las Vegas algorithm for computing partial shifts of simplicial complexes.

It is then natural to ask for which simplicial complexes $K$ and which $K^{(q)}$ can we avoid the loop at \cref{algo:verify}, line~\ref{algo:verify:loop} altogether.
Before we begin to answer this question we will need to understand some basic properties of near cones~\cites[\S4]{BjornerKalai:1988}{Nevo:2005} and their relationship to algebraic shifting.
Recall the notation $\RepVert{\sigma}{i}{j}$ from \eqref{eq:replace-vertex}.

\begin{algorithm}[bp]
  \caption{Verify if $u$ computes the partial shift of a simplicial complex w.r.t a field $\EE \supseteq \FF$}
  \label{algo:simplicial complex}
  \KwIn{$K$ a simplicial complex on $n$ vertices of dimension $d$, $w \in \SymmetricGroup{n}$}
  \KwOut{\True\ if $\Delta_{\frR(w)}(K) = \Delta_{uw}(K)$; otherwise \False.}
  $D \gets \emptyset$\;
  \For{$q = 1, \dotsc, d$}{
    \If{\cref{algo:verify} returns \False \enspace for $K^{(q)}, u, w$ and $D$}{\Return \False}
    $S \gets \Delta_{uw}(K^{(q + 1)})$\;
    $\sigma_{\max} \gets \lex\max S$\;
    $D \gets \Set{\tau \in \binom{[n]}{k + 1} ; \tau \lex\leq \sigma_{\max}, \tau \notin S, \partial \tau \not \subseteq \Delta_{uw}(K^{(q)})}$\;
  }
  \Return \True
\end{algorithm}

\begin{definition}
  \label{def:near cone}
  A simplicial complex $K$ is a \emph{near cone} if for every $\sigma \in K$ with $1 \notin \sigma$ we have $\RepVert{\sigma}{i}{1} \in K$
  for each $i \in \sigma$.
\end{definition}
Note that every shifted complex is a near cone, and in particular satisfies the following statement.

\begin{proposition}[{\cite[Theorem~4.3]{BjornerKalai:1988}}]
  \label{thm:betti near cone}
  Let $K$ be a near cone.
  Then $K$ is homotopy equivalent to a wedge of spheres and has Betti numbers $\beta_q(K) = \abs{\Set{\sigma \in K^{(q)} ; \sigma \cup \{1\} \notin K}}$.
\end{proposition}

We see that the non-faces $\sigma \ni 1$ of dimension $q$ count the $(q-1)$st Betti numbers.
Suppose that $K$ is a simplicial complex with $\beta_i = 0$ for $1 \leq i < \dim K$, then by \cref{thm:betti near cone} we know that any $q$-dimensional non-face $\sigma \ni 1$ must contain a lower dimensional non-face.
In the cases where $q \geq 2$, the set $T$ from step \ref{algo:verify:T} in \cref{algo:verify} will be empty if for any non-face $\sigma \not \owns 1$ we have $\sigma > \sigma_{\max}$, here $\sigma_{\max}$ denotes the lexicographically largest $q$-face of $\Delta(K^{(q)})$.
Putting things together we get the following statement.

\begin{proposition}
  \label{thm:complexity}
  Let $K$ be a simplicial complex of dimension $d$ with $\beta_i(K) = 0$ for $1 \leq i < d$, with $\Set{\sigma \in \Delta(K^{(q)}); 1 \notin \sigma}$ forming a $\lex\leq$-interval for each $q \geq 2$.
  Let $t$ denote the number of steps required to compute $\Delta(K^{(1)})$, then \cref{algo:simplicial complex} computes $\Delta(K)$ in $\mathcal{O}\bigl(t + \sum_{q =2}^d\bigl(\binom{n}{q + 1} - f_q\bigr) \bigl(\binom{n}{q} - f_{q - 1}\bigr)\bigr)$ steps, where $f$ denotes the $f$-vector of $K$.
\end{proposition}
\begin{proof}
  The above discussion shows that when $q > 1$ we need only check that non-faces $\sigma \ni 1$ of dimension $q$ contain lower dimensional non-faces.
  What remains is to bound the complexity of such checks.
  Note that an upper bound on the number of non-faces of dimension $q$ less than the lex largest face of $\Delta(K^{(q)})$ is $\binom{n}{q + 1} - f_q$.
  Checking whether $\sigma \in \binom{[n]}{q + 1}$ contains a lower dimensional non-face is linear in the number of $(q{-}1)$-dimensional non-faces, which is bounded by $\binom{n}{q} - f_{q - 1}$.
  Hence for a fixed dimension $q$, the number of steps required to check that all non-faces smaller than the lexicographically largest face of $\Delta(K^{(q)})$ is bounded by $\bigl(\binom{n}{q + 1} - f_q\bigr) \bigl(\binom{n}{q} - f_{q - 1}\bigr)$.
  Summing up over all $q \geq 2$ we obtain our bound.
\end{proof}

The authors of \cite{BulavkaNevoPeled25} have found that a typical exterior shift of a triangulated surface has the form of what they call a \emph{homology lex-segment complex}.
The previous result \cref{thm:complexity} shows that for a homology lex-segment complex with only top dimensional homology \cref{algo:simplicial complex} will only perform matrix reductions when computing the exterior shift of the $1$-skeleton.

\subsection{Other algorithms}
For the sake of completeness we briefly survey some special procedures for shifting.
\paragraph{Combinatorial shifting}
Apart from \emph{algebraic shifting} as introduced by Kalai~\cite{Kalai84,Kalai86}, there is also a notion of \emph{combinatorial shifting} due to Erd\H{o}s--Ko--Rado~\cite{ErdosKoEtAl:1961}.
The reader is referred to the surveys by Frankl~\cite{Frankl:1987} and Kalai~\cite[\S 6.2]{Kalai02} for the connection with extremal combinatorics.

\begin{definition}[{\cite[\S 2]{Frankl:1987}}]
	For $t \in \SymmetricGroup{n}$ a transposition, the \emph{combinatorial shift} $\CShift_t(S)$
	of a $k$-uniform hypergraph $S \subseteq \binom{[n]}{k}$
	is the $k$-uniform hypergraph $\CShift_t(S) \coloneqq \Set{\CShift_t(\sigma, S); \sigma \in S}$, where
	\[
		\CShift_t(\sigma, S) \ \coloneqq \ \begin{cases}
			\sigma \cdot t & \text{if $\sigma > \sigma \cdot t \notin S$}, \\
			\sigma         & \text{otherwise.}
		\end{cases}
	\]
	By construction we have $\abs{\CShift_t(S)} = \abs{S}$.
\end{definition}

It has been shown in \cite[Proposition~25]{VecchiaJoswigLenzen:2024} %\cref{thm:partial shift of simple transpositions}
that $\Delta_{\frR(t)}(S) = \CShift_t(S)$
for all $S \subseteq \binom{[n]}{k}$ and all transpositions $t \in \SymmetricGroup{n}$.
We remark that in contrast to partial shifts by general permutations, $\CShift_t(S)$ can be computed in time $\mathcal{O}(\abs{S})$.
It is known, however, that not every exterior partial shift can be realized as a sequence of combinatorial shifts \cite[\S 6.2]{Kalai02}.

\paragraph{Shifting of low genus surface triangulations}
\label{sec:surfaces}
For the special case that $K$ is a triangulation of the two-torus, the real projective plane or the Klein bottle,
Keehn and Nevo~\cite{KeehnNevo:2024} demonstrated that $\Delta(K)$ can be computed from $K$ in polynomial time in the number of vertices, with degree depending on the topology of $K$.
Specifically, if $K$ has $n$ vertices, then their method computes computes $\Delta(K)$
in time $\mathcal{O}(n^8)$ if $K$ is a triangulation of the torus or the Klein bottle,
and in time $\mathcal{O}(n^5)$ if $K$ is a triangulation of the real projective plane.
Their method is restricted to exterior shifting in characteristic zero.

\section{Implementation details} % 1 col
\label{sec:implementation}

Finding the lexicographic minimum basis in \cref{algo:verify} uses Gaussian elimination over the fields $\EE$ and $\FF(x_{\inv w})$ to find a row echelon form.
For an $n\times n$-matrix, this requires $\mathcal{O}(n^3)$ arithmetic operations in the underlying field.
The interesting case is the field of functions $\FF(x_{\inv w})$ over a prime field $\FF$.

\subsection{Multivariate polynomials}
\OSCAR represents a multivariate polynomial as a hash table, with the multivariate exponents as the keys and the coefficients as the values.
Consequently, a multivariate rational function can be written as the pair of a numerator and a denominator which are coprime.
The linear algebra is usually organized to get away with arithmetic in the multivariate polynomial ring $\FF[x_{\inv w}]$ as much as possible.
Row reduction over such a ring uses the extended Euclidean algorithm, and then multiplying the rows involved in a row subtraction step by the gcd cofactors before the subtraction.
The pivot of a column is picked as the lowest index of a non-zero entry that minimizes the length (i.e., number of terms) of that entry.

\subsection{Finite fields}
As we put a special focus on computations in positive characteristic, it is worth-while to briefly discuss the implementation of finite fields.
\OSCAR has several versions of these.
This includes the type \texttt{FqField} for generic finite fields of arbitrary prime power order.
Our experiments employ the more efficient type \texttt{fpField}, which is restricted to fields of prime order, where the prime is a 64 bit unsigned integer.
The following detail is crucial for the efficacy of \cref{algo:verify}: even if $\EE = \GF{p^n}$ for $n > 1$ (implemented with \texttt{FqField}),
it suffices to compute line~\ref{algo:verify:exit} in the field $\FF = \GF{p}$ (implemented with \texttt{fpField}).
Calculating line~\ref{algo:verify:exit} over $\GF{p^n}$ would have a noticeable impact on the performance.

\subsection{Details concerning the Las Vegas algorithm}
\label{sec:lv-implementation-detail}
Recall that the Las Vegas \cref{algo:lv} first computes the shift $C = \Delta_g(S)$ for a random matrix $g$ with entries in $\FF$, where $\FF = \QQ$ or a finite field,
and then verifies that $C = \Delta(S)$ by a computation over $\EE = \FF(x_{\inv w})$.
We observed in prior experiments that in general, the first is orders of magnitudes faster than the second, due to the challenging computation over the fraction field $\EE$.

Consequently, we used the following variant of the Las Vegas Algorithm~\ref{algo:lv} for our experiments:
instead of line~\ref{algo:lv:u},
we pick random matrices $g_1,\dotsc,g_N$ as in \ref{algo:lv:u}, let $g = \lex\argmin\Set{\Delta_{g_i}(S); i \leq N}$, and then apply \cref{algo:verify} to $g$.
If not stated otherwise, we let $N = 500$ for $\EE$ finite, and $N = 1$ for $\EE = \QQ$.
The latter choice is motivated by the fact that in our experiments, we did not observe a single random rational matrix that was not generic enough.

\begin{remark}
	\label{rmk:ref-implementation}
	Recall that we decide \cref{algo:verify}, line~\ref{algo:verify:exit} by computing the row echelon form of the matrix $m$.
	This is done by the function \texttt{Oscar.ModStdQt.ref\_ff\_rc!} of \OSCAR.
	While the strategy and heuristics implemented in this function generally are suitable for our needs, we stress that precise strategy for computing row echelon forms
	can have drastic impact on the total running time and can lead to unexpected results (see also \cref{rmk:increased-time}).
\end{remark}

\subsection{Lazy row reduction}
\label{sec:fabian's reduction}
To underpin what we mean here, we propose the following alternative to computing the entire row echelon form.
Recall that neither for the deterministic nor the Las Vegas algorithm, we need the full row echelon form, but rather the positions of the pivots.
Therefore, we can spare some row reductions of elements above the pivots.
Namely, if a matrix $m$ has already been brought into the shape (via elementary row operations)
such that rows $1$ through $r$ are in row echelon form and each contain a step in columns $1$ through $s$,
then when considering the $(s{+}1)$st column of $m$, it is not necessary to evaluate the first $r$ entries of this column,
for they all lie in rows that already contain a step of the row echelon form.

Consequently, \cref{alg:lazy reduction} encodes the accumulated elementary row operations on $m$ by a helper matrix $v$,
and only evaluates the matrix-vector product $v m_{*,j}$ when deciding if the column $c = (vm)_{*j}$ contains a step.
In this case, there are different strategies for selecting the pivot of this column.
In our setting, where the entries of $c$ are polynomials,
we choose the least index $i$ of a non-zero entry $c_i$ that minimizes the length of $c_i$.

Note that \cref{alg:lazy reduction} computes the \emph{row} echelon form of $m$, but does so in a \emph{column} oriented way.

\begin{algorithm}[tb]
	\caption{Lazy row reduction. One can think of different strategies how to choose the pivot of a column in line~\ref{alg:lazy reduction:pivot}.}
	\label{alg:lazy reduction}
	\KwIn{$l\times n$-matrix $m$}
	\KwOut{$\Ind m$}
	$v \gets \text{$l\times l$-identity matrix}$\;
	$I \gets \emptyset$\;
	%\textcolor{gray}{$N \gets \emptyset$}\;
	\For{$j=1,\dotsc,n$}{
%		{\color{gray}
%		\If{$\RepVert{\sigma_j}{i}{i-1} \in N$ for any $1 < i \in \sigma_j$}{
%			$N \gets N \cup \{\sigma_j\}$\;
%			\Continue
%		}}
		$c \gets (m_{\abs{I}+1,*},\dotsc,m_{l,*})^T v$\;
		\lIf{$c = 0$}{%
%			\textcolor{gray}{$N \gets N \cup \{\sigma_j\}$}\;
			\Continue
		}
		$I \gets I \cup \{j\}$\;
		\lIf{$\abs{I} = l$}{\Return $I$}
		\nlset{$(*)$}\label{alg:lazy reduction:pivot}%
		$i \gets \mathit{pivot}(c)$\;
		\If{$i \neq 1$}{
			swap entries $c_1$ and $c_i$\;
			swap rows $v_{\abs{I}+1,*}$ and $v_{\abs{I}+i,*}$\;
		}
		\For{$i = 2,\dotsc,m-\abs{I}$}{
			\lIf{$c_i = 0$}{\Continue}
			compute $a, b$ s.t.\ $ac_i + bc_1 = \gcd(c_i, c_1)$\;
			$v_{\abs{I}+i,*} \gets b v_{\abs{I}+i,*} - a v_{\abs{I}+1,*}$\;
			$v_{\abs{I}+i,*} \gets v_{\abs{I}+i,*} / \gcd(v_{\abs{I}+i,*})$\;
		}
	}
	\Return $I$\;
\end{algorithm}

\section{Computational experiments} % at least 3 cols
\label{sec:experiments}
% All experiments run on born, using
% ulimit -v 80000000 -x 0
% export JULIA_DEPOT_PATH=/tmp/julia
% julia-1.10 --heap-size-hint=1 --project=. run_times.jl 2>&1 | tee run_times_born.log

In this section, we report run time and memory consumption data for different algorithms for computing (partial) exterior shifting
of different uniform hypergraphs and simplicial complexes with coefficients in varying fields.
Our experiments were conducted with the computer algebra system \OSCAR, version 1.7.0 \cite{Oscar,OSCAR-book} on Julia 1.12.\footnote{%
  The timings in the ISSAC extended abstract were obtained using Julia 1.10 and \OSCAR 1.3.1.
}
To reproduce the results consult the GitHub repository \cite{BenchmarksRepo} for the details.

All experiments were run on a desktop computer running OpenSUSE Leap 15.6
with an AMD Ryzen 9 5900X 12-core processor and \SI{128}{\giga\byte} of main memory. % born
Run time and memory consumption are listed as reported by the \texttt{@timed}-macro of Julia.
Note that the memory consumption estimates the total amount of allocated space during run time, not peak memory.
All instances were run in a separate Julia process, with a virtual memory limit set (via \texttt{ulimit -v}) to \SI{80}{\giga\byte},
and with a time limit set to \SI{30}{\minute} (excluding initialization and setup of the worker process); \texttt{ulimit} is a built-in Linux shell command.
If the instances hits the limit, the tables below contain entries \enquote{oom} (out of memory) and \enquote{oot} (out of time), respectively.

\subsection{Bipartite graphs}
\label{sec:bipartite}
Our first experiment is designed to compare the various algorithms for exterior shifting in characteristic zero.
Here we restrict our attention to exact algorithms that give provably correct results.
The Monte Carlo procedure is much faster (see \cref{subsec:surfaces} for timings with other input), but it does not provide any certificates.

We computed $\Delta(S)$ with coefficients in $\QQ$ for $S$ running over various complete bipartite graphs $K_{m,n}$, 
where the nodes of the two sides are labeled $\{1,\dotsc,m\}$ and $\{m+1,\dotsc,n\}$.
Note that $K_{m,n}$ is not close to being shifted; in fact, it is being equally far from being shifted w.r.t.\ the lexicographic and the reverse lexicographic order.
While $K_{m,n}$ and $K_{n,m}$ are clearly isomorphic, their node labelings are different, and so the computation times may differ, too.
We obtain by $\Delta(S)$ by computing one of
\begin{enumerate}
	\item $\Delta(S) = \Delta_{\frX}(S)$ for the matrix $\frX = (x_{ij})_{ij}$ over the field $\EE = \QQ(x_{ij} \mid 1 \leq i,j \leq n)$, where $n$ is the number of vertices of $S$,
	\item $\Delta(S) = \Delta_{\frR(w_0)}(S)$ for the matrix $\frR(w_0)$ over $\EE = \QQ(x_{ij}\mid1<i<j\leq n$),
	\item using the Las Vegas algorithm~\ref{algo:lv}, which uses $\frR(w_0)$ for checking if the random $\QQ$-matrix $u$ computes $\Delta(S)$ correctly.%
\end{enumerate}

The time and memory consumption of the three approaches to computing $\Delta(S)$ for different graphs $S$ is listed in \cref{tab:timing-graphs}.
Note that each algorithm involves computing the row echelon form of a matrix whose entries are multivariate polynomials over $\QQ$.
We list the maximal length (i.e., number of terms) and degree of the entries of this matrix ($\frX$, $\frR(w_0)$, or the submatrix $m$ of $\frR(w_0)$ as defined in \cref{algo:verify}, respectively) after computing the row echelon form
(initially, both quantities are 2 for all algorithms).
For the Las Vegas algorithm, it is not always necessary to compute this row echelon form (namely, if $T=\emptyset$ in \cref{algo:verify});
in this case, we print \enquote{n/a}.
The table also lists (in gray) running times of these algorithms using the lazy \cref{alg:lazy reduction} for computing the row echelon form; see \cref{subsubsec:lazy} for details.

The results show the superiority of the Las Vegas method, with memory as the limiting factor.
Even larger examples (like, e.g., $K_{4,6}$ and $K_{6,4}$ with ten nodes and 24 edges each) only require a few seconds; the largest one in our list, $K_{6,5}$ with eleven nodes and 30 edges takes less than two minutes.

\begin{table*}
	\caption{(bipartite graphs) %
		Comparison of the runtime (in seconds) and accumulated memory consumption (in \unit{\mega\byte}) of computing $\Delta(S)$ (with coefficients in $\QQ$) for various complete bipartite graphs $S$
		using the deterministic algorithm (using the matrices $\frX$ and $\frR(w_0)$), and the Las Vegas algorithm (\cref{algo:lv});
    see \cref{sec:bipartite}.
		The columns printed in gray are obtained by using \cref{alg:lazy reduction} for finding $\Ind g$, instead of computing the full row echelon form;
		see \cref{subsubsec:lazy}.
	}
	\label{tab:timing-graphs}
	\centering
	\setlength{\tabcolsep}{2pt}
	\tiny
	\leavevmode\clap{
	\csvreader[
		tabular = {
			c A[2.0] A[2.0]
			A[3.2] >{\color{gray}}A[3.2] M[3.2] >{\color{gray}}M[3.2] A[5.0] A[2.0]
			A[3.2] >{\color{gray}}A[2.2] M[3.2] >{\color{gray}}M[3.2] A[5.0] A[2.0]
			A[3.2] >{\color{gray}}A[3.2] M[3.2] >{\color{gray}}M[3.2] A[5.0] A[2.0]
		},
		head to column names,
		table head = {
			\toprule
			& & & \multicolumn{6}{c}{$\Delta_{\frX(S)}$} & \multicolumn{6}{c}{$\Delta_{\frR(w_0)}(S)$} & \multicolumn{6}{c}{Las Vegas} \\
			\cmidrule(lr){4-9} \cmidrule(lr){10-15} \cmidrule(lr){16-21}
			$S$ & \Rot{\#vertices} & \Rot{\#edges} &
			\Rot{time} & \Rot{time'} & \Rot{memory} & \Rot{memory'} & \Rot{length} & \Rot{degree} &
			\Rot{time} & \Rot{time'} & \Rot{memory} & \Rot{memory'} & \Rot{length} & \Rot{degree} &
			\Rot{time} & \Rot{time'} & \Rot{memory} & \Rot{memory'} & \Rot{length} & \Rot{degree} \\
			\midrule
		},
		table foot = \bottomrule
	]{bipartite_table_born_post_issac.csv}{}{%
		\instance & \nVertices & \nEdges &
		\avTime & \avfTime & \avMemory & \avfMemory & \avmaxLenAfter & \avmaxDegAfter &
		\hvTime & \hvfTime & \hvMemory & \hvfMemory & \hvmaxLenAfter & \hvmaxDegAfter &
		\lvTime & \lvfTime & \lvMemory & \lvfMemory & \lvmaxLenAfter & \lvmaxDegAfter
	}
	}
\end{table*}

\subsection{Surface triangulations}\label{subsec:surfaces}
% Creates a new csv file that joins the surfaces_extra and surfaces table on their instance. We use this to report the data from surfaces_extra alongside the data from surfaces.
%\immediate\write18{\unexpanded{
%		sed -e 's/\(^\|,\) */\1/g' surfaces_table_post_issac_born.csv | awk -F, -vOFS=, 'NR==1||$9==2{print}' > tmp1.csv; 
%		sed -e 's/\(^\|,\) */\1/g' surfaces_extra_born_post_issac.csv | awk -F, -vOFS=, 'NR==1{for(i=1;i<=NF;i++)$i="_"$i;} NR==1||$9==2{print}' > tmp2.csv; 
%		join -t, -a1 --header -e' ' -o auto tmp1.csv tmp2.csv >surfaces_combined.csv
%}}
\newdimen\HalfHourOrOotDimen
\newcommand{\HalfHourOrOot}[2]{
	\IfDecimal{#1}{%
		\ifthenelse{\lengthtest{#1pt>1800pt}}{oot}{#2}
	}{%
		#1
	}
}
\begin{table*}
	\caption{(surfaces) %
		Computing the $2$-dimensional part of $\Delta(K)$ using the Las Vegas algorithm,
		where $K$ ranges over triangulations of surfaces of varying genus $g$, orientability $o$, and number $n$ of vertices \cite{Lutz:surfaces}.
		Triangulations of identical parameters are numbered consecutively (column \enquote{$i$}).
		Runs marked with an asterisk did not run the loop in \cref{algo:verify}, line~\ref{algo:verify:loop}, because $T=\emptyset$.
		The gray numbers were obtained using \cref{alg:lazy reduction} for computing row echelon forms.
		This experiment uses \OSCAR 1.7.0.
		% Producing this table takes ~71h
	}
	\label{tab:timing-surfaces}
	\centering
	\setlength\tabcolsep{1.5pt}
	\tiny%
	\renewcommand{\arraystretch}{0.965}% Tighter
	\leavevmode\clap{
		\csvreader[
		head to column names,
		head to column names prefix=Col,
		tabular = {
			r |
			*{5}{r}
			| c A[ 1] c  % QQ
			| c A[ 3] c  % F2
			| ccA[ 3] c  % F4
			| c A[ 3] c  % F3
			| c A[ 1] c  % F9
			| c A[ 2] c  % F5
			| c A[ 1] c  % F25
			| c A[ 1] c  % F7919
			| c A[ 1] c  % F...
		},
		table head = {
			\toprule
			&&&&&&
			\multicolumn{3}{c}{$\QQ$} &
			\multicolumn{3}{c}{$\GF{2}$} &
			\multicolumn{4}{c}{$\GF{4}$} &
			\multicolumn{3}{c}{$\GF{3}$} &
			\multicolumn{3}{c}{$\GF{9}$} &
			\multicolumn{3}{c}{$\GF{5}$} &
			\multicolumn{3}{c}{$\GF{25}$} &
			\multicolumn{3}{c}{$\GF{7919}$} &
			\multicolumn{3}{c}{$\GF{7919^2}$} \\
			& $n$ & $o$ & $g$ & $i$ & \Rot{\#$2$-faces} &
			\Rot{time} & \Rot{\#trials} &&
			\Rot{time} & \Rot{\#trials} &&
			\Rot{time} && \Rot{\#trials} &&
			\Rot{time} & \Rot{\#trials} &&
			\Rot{time} & \Rot{\#trials} &&
			\Rot{time} & \Rot{\#trials} &&
			\Rot{time} & \Rot{\#trials} &&
			\Rot{time} & \Rot{\#trials} &&
			\Rot{time} & \Rot{\#trials} &  \\
			\midrule
		},
		table foot = \bottomrule,
		respect all, % file names in csv files contain tex special characters,
		filter equal={\Colq}{2}
%		]{surfaces_table_post_issac_born.csv}{}{%
		]{surfaces_combined.csv}{}{%
			\thecsvrow & \ColnVertices & \Tick{\Colorientable} &\Colgenus & \Colindex & \ColnFaces &
			\TimeOrDash[3.1]{ColQQLv}        & \CS{ColQQLvTrials}        & \EarlyReturn{ColQQ}        &
			\TimeOrDash[4.1]{ColF2Lv}        & \CS{ColF2LvTrials}        & \EarlyReturn{F2}        &
			\TimeOrDash[4.1]{ColF4Lv}        & \textcolor{gray}{\HalfHourOrOot{\CS{Col_F4LvfTime}}{\TimeOrDash[3.1]{Col_F4Lvf}}} &	\CS{ColF4LvTrials}        & \EarlyReturn{ColF4} \textcolor{gray}{\EarlyReturn{Col_F4}}       &
			\TimeOrDash[4.1]{ColF3Lv}        & \CS{ColF3LvTrials}        & \EarlyReturn{CF3}        &
			\TimeOrDash[4.1]{ColF9Lv}        & \CS{ColF9LvTrials}        & \EarlyReturn{CF9}        &
			\TimeOrDash[3.1]{ColF5Lv}        & \CS{ColF5LvTrials}        & \EarlyReturn{CF5}        &
			\TimeOrDash[3.1]{ColF25Lv}       & \CS{ColF25LvTrials}       & \EarlyReturn{CF25}       &
			\TimeOrDash[4.1]{ColF7919Lv}     & \CS{ColF7919LvTrials}     & \EarlyReturn{CF7919}     &
			\TimeOrDash[4.1]{ColF62710561Lv} & \CS{ColF62710561LvTrials} & \EarlyReturn{CF62710561}
		}
	}
\end{table*}
\begin{table*}
	\centering
	\caption{(surfaces, detail) %
		Detail for selected rows of \cref{tab:timing-surfaces}:
		The total running time (\enquote{tot}) of the Las Vegas algorithm as reported in the column \enquote{time} of \cref{tab:timing-surfaces}
		essentially, time comprises the time for computing $\Delta_g(S)$ for the $N = 1$ (for $\QQ$) resp.\ $N = 500$ (for finite fields) random matrices $g$ (column \enquote{A}),
		and the time for verifying the correctness of the result using \cref{algo:verify} (column \enquote{B}).
		The row numbers refer to the same instances as \cref{tab:timing-surfaces}.
	}
	\label{tab:timing-surfaces:phases}
	\setlength\tabcolsep{1pt}
	\tiny
	\leavevmode\clap{
		\csvreader[
		head to column names,
		head to column names prefix=Col,
		tabular = {
			r
			| A[3.1] A[1.1] A[4.1]  % QQ
			| A[4.1] A[1.1] A[4.1]  % F2
			| A[4.1] A[1.1] A[4.1]  % F4
			| A[4.1] A[1.1] A[4.1]  % F3
			| A[4.1] A[1.1] A[4.1]  % F9
			| A[3.1] A[1.1] A[3.1]  % F5
			| A[3.1] A[1.1] A[3.1]  % F25
			| A[3.1] A[1.1] A[3.1]  % F7919
			| A[3.1] A[1.1] A[3.1]  % F7919^2
		},
		table head = {
			\toprule &
			\multicolumn{3}{c|}{$\QQ$} &
			\multicolumn{3}{c|}{$\GF{2}$}&
			\multicolumn{3}{c|}{$\GF{4}$}&
			\multicolumn{3}{c}{$\GF{3}$}&
			\multicolumn{3}{c}{$\GF{9}$}&
			\multicolumn{3}{c}{$\GF{5}$}&
			\multicolumn{3}{c}{$\GF{25}$}&
			\multicolumn{3}{c}{$\GF{7919}$}&
			\multicolumn{3}{c}{$\GF{7919^2}$}\\
			&{tot} & {A} & {B}
			&{tot} & {A} & {B}
			&{tot} & {A} & {B}
			&{tot} & {A} & {B}
			&{tot} & {A} & {B}
			&{tot} & {A} & {B}
			&{tot} & {A} & {B}
			&{tot} & {A} & {B}
			&{tot} & {A} & {B} \\
			\midrule
		},
		table foot = \bottomrule,
		%	 	filter=\equal{\q}{2}\and\equal{\nVertices}{8}\and\equal{\orientable}{1}\and\equal{\genus}{0}\and\index>=7,
		filter expr={ 
			(
			test{\ifnumequal{\Index}{12}}
			or test{\ifnumequal{\Index}{49}}
			or test{\ifnumequal{\Index}{51}}
			or test{\ifnumequal{\Index}{54}}
			)	    
			and test{\ifnumequal{\Colq}{2}}
		},
		]{surfaces_table_post_issac_born.csv}{1=\Index}{%
			\Index &
			\CS{ColQQLvTime}        & \CS{ColQQLvTimeA}        & \CS{ColQQLvTimeB}        &
			\CS{ColF2LvTime}        & \CS{ColF2LvTimeA}        & \CS{ColF2LvTimeB}        &
			\CS{ColF4LvTime}        & \CS{ColF4LvTimeA}        & \CS{ColF4LvTimeB}        &
			\CS{ColF3LvTime}        & \CS{ColF3LvTimeA}        & \CS{ColF3LvTimeB}        &
			\CS{ColF9LvTime}        & \CS{ColF9LvTimeA}        & \CS{ColF9LvTimeB}        &
			\CS{ColF5LvTime}        & \CS{ColF5LvTimeA}        & \CS{ColF5LvTimeB}        &
			\CS{ColF25LvTime}       & \CS{ColF25LvTimeA}       & \CS{ColF25LvTimeB}       &
			\CS{ColF7919LvTime}     & \CS{ColF7919LvTimeA}     & \CS{ColF7919LvTimeB}     &
			\CS{ColF62710561LvTime} & \CS{ColF62710561LvTimeA} & \CS{ColF62710561LvTimeB}
		}
	}
\end{table*}
In our second experiment we look into $2$-dimensional simplicial complexes with nontrivial topology.
We computed $\Delta(K)$ using the Las Vegas algorithm~\ref{algo:lv}, where $K$ runs over various triangulations of surfaces, with coefficients in $\QQ$ and various finite fields.
The triangulations were obtained from Frank Lutz' list of manifold triangulations \cite{Lutz:surfaces}\footnote{%
  The files in \cite{Lutz:surfaces} contain the facets of the respective triangulations.
  The labels are of the form \texttt{manifold\_lex\_d$d$\_n$n$\_o$o$\_g$g$\_\#$i$},
  where $d$ stands for the dimension (2), $n$ for the number of vertices, $o$ for the orientability (1 or 0), $g$ for the genus,
  and $i$ is the same consecutive numbering as in \cref{tab:timing-surfaces}.
  We include this data as \texttt{mrdi} files in the \texttt{examples} folder of \cite{BenchmarksRepo},
  where we change the filenames by removing the \# and adding the corresponding table entry number to the front.
}.

We report the running times for computing $\Delta(K^{(2)})$ in \cref{tab:timing-surfaces}.
The triangulations are classified by genus, orientability, number of vertices, and (in case of ambiguity) a consecutive number.
As explained in \cref{sec:lv-implementation-detail}, the Las Vegas algorithm picks random matrices $g_1,\dotsc,g_N$ over $\FF$, where $N = 1$ for $\FF = \QQ$ and $N = 500$ otherwise.
The algorithm then tests if $g \coloneqq \lex\argmin\Set{\Delta_{g_i}(K^{(2)}); g_i, i = 1,\dotsc,N}$ satisfies $\Delta(K^{(2)}) = \Delta_g(K^{(2)})$.
\Cref{tab:timing-surfaces} contains in the column \enquote{\#trials} the index $i$ of the first $g_i$ that attains this minimum. %otherwise, this column reads \enquote{$>N$}.
We mark a run in \cref{tab:timing-surfaces} with an asterisk if $T = \emptyset$ in \cref{algo:verify}.
In this case, verifying if the shift $\Delta_g(K^{(2)})$ did not involve any computation over the fraction field $\EE = \FF(x_{\inv w_0})$.

For selected instances from \cref{tab:timing-surfaces}, we report in \cref{tab:timing-surfaces:phases} on the time needed to compute $g \coloneqq \lex\argmin\Set{\Delta_{g_i}(K^{(2)}); g_i, i = 1,\dotsc,N}$ (\enquote{phase A})
and verify that $\Delta(K^{(2)}) = \Delta_g(K^{(2)})$ (\enquote{phase B}).
For the runs marked with an asterisk in \cref{tab:timing-surfaces}, there is no phase B.

\medskip
The results of \cref{tab:timing-surfaces,tab:timing-surfaces:phases} lead us to a few observations.
Most notably, we observe that if $\FF$ is a small finite field, the run time (and also memory consumption) can be much larger than for $\FF = \QQ$ or a large finite field.
We observed that the run time and memory consumption are tightly correlated.
In the interest of a tight representation, we omit precise numbers for memory consumption.

All instances of reasonable size that can be computed within seconds are fast because in fact,
\cref{algo:verify} essentially does nothing:
in these cases (marked with an asterisk in \cref{tab:timing-surfaces}), the set $T$ in \cref{algo:verify} is empty,
which means that the algorithm need not do any expensive calculations over the fraction field $\EE$.
In cases where the verification process actually needs to do such calculations (instances without asterisk),
we see in \cref{tab:timing-surfaces:phases} that the total run time is dominated by the verification step.

As far as the coefficients are concerned, we picked the first few finite fields of prime order, namely $\GF{2}$, $\GF{3}$ and $\GF{5}$ plus $\GF{7919}$ as one fairly large finite field; the particular prime 7919 does not seem to be important.
In \cref{ex:not every generic shift is concrete} we saw that for some input a particular field may be too small to allow for a sufficiently generic matrix of the correct size.
Accordingly, we will see some cases in \cref{sec:non-surfaces} below where 500 trials were not enough to find a sufficiently generic matrix.
Therefore, we experimented with extending the ground fields; these are the columns with $\GF{4}$, $\GF{9}$, $\GF{25}$ and $\GF{7919^2}$.
We observe that if $\FF$ is one of $\QQ$, $\GF{7919}$ or $\GF{7919^2}$, every instance that could be computed was actually obtained as $\Delta_g(K^{(2)})$ for the first pick for $g$.
For smaller fields, this is not the case.
However, passing from the prime field $\GF{p}$ to the algebraic extension $\GF{p^2}$ makes it much easier to find a generic matrix;
we see this by the numbers in the columns \enquote{\#trials} decreasing when passing from $\GF{p}$ to $\GF{p^2}$.
In some cases in \cref{sec:non-surfaces} below, where $p\in\{2,3\}$, this trick allows us to compute shifts, which are inaccessible otherwise.
Of course, computing in a more complicated finite field can come, depending on the instance, at the expense of longer computations and greater memory consumption in phase A.

Concerning topology, we note that we observe much greater run times and memory consumption for $\Char\FF = 2$ compared to the other fields
for the non-orientable surface triangulations, where as for orientable surfaces (i.e., the ones without 2-torsion in homology),
no such special role of characteristic two is observable.
We also note that $T = \emptyset$ for $\FF \in \{\GF{2}, \GF{4}\}$ in \cref{algo:verify} if and only if the surface has genus zero.

\begin{remark}
	\label{rmk:increased-time}
	Comparing with Table 2 in the ISSAC extended abstract displays the benefit of our improvements in \cref{algo:verify}.
	Now the instances of genus zero can be computed in a few seconds at most.
	However, while almost all instances can be computed faster by varying margins, we note that the instance in row 25 of \cref{tab:timing-surfaces} stands out
	by an running time \emph{increased} by a factor of 1.84.
	We explain this by the intricacies of computing row echelon forms over fraction fields:
	the algorithm for computing row echelon forms we use (see \cref{rmk:ref-implementation}) computes the greatest common divisor of each row of the matrix after each operation.
	The computation of the gcd of \emph{more} numbers may be \emph{faster}, because, for example, the computation can exit early if two numbers are coprime.
	Indeed, for the instance in row 25 of \cref{tab:timing-surfaces}, the algorithm we used here needed \emph{longer} to compute the row echelon form of the matrix $m$ from \cref{algo:verify}
	than for the corresponding matrix underlying Table 2 of the ISSAC extended abstract, of which $m$ is a submatrix.
\end{remark}
\medskip
For the sake of completeness, we have also made a comparison with our implementation of the Monte Carlo algorithm with the \Macaulay based implementation of Keehn \cite{M2ExtShifting}.
We ran our comparison on the examples of \cref{tab:timing-surfaces}.
The statistics (measured in seconds) of the running times of our implementation are the following,
average 0.000661, longest  0.00108, shortest 0.000138, median 0.000671.
The implementation of Keehn has the following statistics, average 0.09763, longest 0.12931, shortest 0.02138, median 0.10941.
Notably the implementation in \OSCAR is two orders of magnitude faster.

\subsection{Moore spaces}
\label{sec:non-surfaces}
Our computations for surfaces revealed a correlation between the running times and the homology of a surface.
To study this behavior further, we computed $\Delta(K^{(2)})$ using the Las Vegas algorithm~\ref{algo:lv}, where $K$ runs over triangulations of Moore spaces $M(G, 1)$, for various finite cyclic groups $G$.
Recall that a Moore space $M(G,n)$ is a cell complex with $\widetilde{H}_n(M(G,n)) = G$ and $\widetilde{H}_k(M(G,n)) = 0$ for $k \neq n$, where $\widetilde{H}_n(X)$ denotes the $n$th reduced homology of a topological space $X$; see \cite{Hatcher2002} for more on Moore spaces.
In our experiments, we considered triangulations of $M(\ZZ/q\ZZ, 1)$ for $q = 2,3,4,5$.
In a way, these are the easiest topological spaces with nontrivial homology.
Concretely, cell complexes for these spaces are obtained by identifying the edges of a $q$-gon.
The triangulations we considered are then obtained by the double barycentric subdivision of this cell complex,
and then applying \texttt{polymake}'s function \texttt{bistellar\_simplicification} \cite{DMV:polymake}, which uses simulated annealing to reduce the number of vertices as proposed by Björner and Lutz \cite{Bjoerner+Lutz:2000}.
We do not attempt to verify if these triangulations are minimal.
The precise triangulations are available as \texttt{mrdi} files \cite{DellaVecchiaJoswigLorenz:2024} in our repository \cite{BenchmarksRepo}.\footnote{%
  The files can be found in \texttt{examples/non\_surfaces} and can be loaded with \OSCAR.
}

\begin{table*}
	\caption{(Moore spaces)
		Run time data for the Moore space data set (see \cref{sec:non-surfaces}.
		The time limit was three hours per instance.
		For the meaning of the other columns (including the asterisks), see \cref{tab:timing-surfaces}.
		% Producing this table takes ~48h
	}
	\label{tab:timing-non-surfaces}
	\setlength\tabcolsep{2pt}
	\centering
	\tiny
	\leavevmode\clap{
	\csvreader[
	head to column names,
	head to column names prefix=Col,
	tabular = {
		*{4}{c}
		| c A[ 1] c % Q
		| c A[<3] c % F2
		| c A[ 2] c % F4
		| c A[ 3] c % F3
		| c A[ 2] c % F9
		| c A[ 2] c % F5
		| c A[ 1] c % F25
	},
	table head = {
		\toprule
		&&&&
		\multicolumn{3}{c}{$\QQ$} &
		\multicolumn{3}{c}{$\GF{2}$} &
		\multicolumn{3}{c}{$\GF{4}$} &
		\multicolumn{3}{c}{$\GF{3}$} &
		\multicolumn{3}{c}{$\GF{9}$} &
		\multicolumn{3}{c}{$\GF{5}$} &
		\multicolumn{3}{c}{$\GF{25}$} \\
		instance & $H_1$ & \Rot{\rlap{\#vertices}} & \Rot{\rlap{\#$2$-faces}} &
		\Rot{time} & \Rot{\#trials} &  &
		\Rot{time} & \Rot{\#trials} &  &
		\Rot{time} & \Rot{\#trials} &  &
		\Rot{time} & \Rot{\#trials} &  &
		\Rot{time} & \Rot{\#trials} &  &
		\Rot{time} & \Rot{\#trials} &  &
		\Rot{time} & \Rot{\#trials} &  \\
		\midrule
	},
	table foot = \bottomrule,
	respect none, % file names in csv files contain tex special characters,
	respect sharp,
	respect underscore,
	filter equal={\Colq}{2}
	]{non_surfaces_table_born_post_issac.csv}{}{%
		\texttt{\Colinstance}   & $\FormatGroup{\CS{ColH1}}$ & \ColnVertices & \ColnFaces &
		\TimeOrDash[1.2]{ColQQLv}      & \CS{ColQQLvTrials}        & \EarlyReturn{ColQQ}        &
		\TimeOrDash[4.0]{ColF2Lv}      & \CS{ColF2LvTrials}        & \EarlyReturn{ColF2}        &
		\TimeOrDash[4.0]{ColF4Lv}      & \CS{ColF4LvTrials}        & \EarlyReturn{ColF4}        &
		\TimeOrDash[3.0]{ColF3Lv}      & \CS{ColF3LvTrials}        & \EarlyReturn{ColF3}        &
		\TimeOrDash[3.0]{ColF9Lv}      & \CS{ColF9LvTrials}        & \EarlyReturn{ColF9}        &
		\TimeOrDash[4.0]{ColF5Lv}      & \CS{ColF5LvTrials}        & \EarlyReturn{ColF5}        &
		\TimeOrDash[4.0]{ColF25Lv}     & \CS{ColF25LvTrials}       & \EarlyReturn{ColF25}       
	}
	}
\end{table*}
These examples were run with the same memory limit as above, and a time limit of three hours.
Except for $\EE = \QQ$, 500 samples were taken for the random matrix $g$.
The results, which can be found in \cref{tab:timing-non-surfaces}, are arranged in the same way as the results for the surface triangulations.
In particular for characteristic two, we observe that no instance except two could be computed (with 500 samples) at all over $\GF{2}$,
while when passing to $\GF{4}$, these become computable.
Also, we observe that the index of the first random matrix that yielded the correct shifted complex
decreases as we pass to higher prime powers.
Note that there is no guarantee that a particular instance can be computed over a particular prime field with the Las Vegas algorithm \emph{at all};
in fact, \cref{ex:not every generic shift is concrete} shows that passing to a bigger field may be necessary.

For the instances reported on here, we see that it may happen that $N=500$ are not enough samples to pick a random matrix $g$ such that $\Delta(K^{(q)}) = \Delta_g(K^{(q)})$;
in this case, the columns \enquote{\#trials} reads \enquote{$>500$} and,
since we have not found $\Delta(K^{(q)})$ in this case, we do not report a running time for this case.
Nevertheless, we observe it is usually quick to discard all sampled matrices $g$ (because \cref{algo:verify} has $T=\emptyset$; see the instances marked with an asterisk).

\subsection{Selected triangulations of other spaces}
\label{sec:balls-and-spheres}

In this section, we report the running times for various triangulations of balls, spheres and a few other spaces.
These measurements do not appear in the ISSAC extended abstract because our implementation then was unable to complete the computation within the given limits. 
Now, thanks to our improved \cref{algo:verify}, most instances take only seconds.

Except for the first, which is a non-polytopal $3$-sphere from \cite{altshuler}, our instances were obtained from \cite{Lutz:surfaces}.
These are selected simplicial complexes of dimension at most four with 14 vertices or fewer.
More precisely, \verb+altshuler_M_9_963+ and \verb+barnette_sphere+ are $3$-dimensional spheres; the \verb+dunce_hat+ is a contractible but not collapsible $2$-complex; \verb+dunce_hat_in_3_ball+ is a $3$-ball with a dunce hat as an embedded subcomplex; the space \verb+kuehnel_cp2_9+ is a complex projective plane, which is an orientable $4$-manifold; \verb+l_3_1_brehm+ is a lens space; \verb+rp_3_11+ is a real projective $3$-space; and \verb+rudin+ is a non-shellable $3$-ball.
The precise triangulations are available as \texttt{mrdi} files \cite{DellaVecchiaJoswigLorenz:2024} in our repository \cite{BenchmarksRepo}.\footnote{%
  The files can be found in \texttt{examples/other\_examples} and can be loaded with \OSCAR.
}
For a complex $K$ listed in \cref{tab:balls-and-spheres}, we report the running times for computing $\Delta(K^{(q)})$ for $q = 1,\dotsc,\dim K$.
Otherwise, the table is structured as \cref{tab:timing-surfaces}.
\newif\ifNextblock
\begin{table*}
	\caption{(other spaces)
		Time to compute the shift of various spaces for various fields, using the Las Vegas algorithm.
		The meaning of the columns \enquote{\#trials} and of the asterisks is as in \cref{tab:timing-surfaces}.
		The time limit was three hours per instance.
		% Producing this table takes ~48h
	}
	\label{tab:balls-and-spheres}
	\setlength\tabcolsep{2pt}
	\centering
	\tiny
	\leavevmode\clap{
	\def\X{}
	\csvreader[
		head to column names,
		head to column names prefix=Col,
		tabular = {
			*{5}{c}
			| c A[ 1] c % Q
			| c A[<3] c % F2
			| c A[<3] c % F4
			| c A[ 3] c % F3
			| c A[ 2] c % F9
			| c A[<3] c % F5
			| c A[ 1] c % F25
			| c A[ 1] c % F7919
			| c A[ 1] c % F7919^2
		},
		table head = {
			\toprule
			&&&&&
			\multicolumn{3}{c}{$\QQ$} &
			\multicolumn{3}{c}{$\GF{2}$} &
			\multicolumn{3}{c}{$\GF{4}$} &
			\multicolumn{3}{c}{$\GF{3}$} &
			\multicolumn{3}{c}{$\GF{9}$} &
			\multicolumn{3}{c}{$\GF{5}$} &
			\multicolumn{3}{c}{$\GF{25}$} &
			\multicolumn{3}{c}{$\GF{7919}$} &
			\multicolumn{3}{c}{$\GF{7919^2}$} \\[2pt]
			instance &\Rot{\#vertices} & $q$ & \Rot{{\#$q$-faces}} & $H_{q-1}$ &
			\Rot{time} & \Rot{\#trials} &  &
			\Rot{time} & \Rot{\#trials} &  &
			\Rot{time} & \Rot{\#trials} &  &
			\Rot{time} & \Rot{\#trials} &  &
			\Rot{time} & \Rot{\#trials} &  &
			\Rot{time} & \Rot{\#trials} &  &
			\Rot{time} & \Rot{\#trials} &  &
			\Rot{time} & \Rot{\#trials} &  &
			\Rot{time} & \Rot{\#trials} &  \\
			\midrule
		},
		table foot = \bottomrule,
		respect none, % file names in csv files contain tex special characters,
		respect sharp,
		respect underscore,
		after line={\ifthenelse{\equal{\Colq}{\Coldim}}{\global\Nextblocktrue}{\global\Nextblockfalse}},
		late after line={\ifNextblock\\[3pt]\else\\\fi},
		late after last line={\\}
	]{other_examples_table_dragon-1.csv}{}{% 
		\ifthenelse{\equal{\Colq}{1}}{\multirow{\Coldim}{*}{\texttt{\StrSubstitute{\Colinstance}{.mrdi}{}}}}{} &
		\ifthenelse{\equal{\Colq}{1}}{\multirow{\Coldim}{*}{\ColnVertices}}{} &
		\Colq & 
		\ColnFaces &
		$\FormatGroup{\CS{ColHq-1}}$ 
		& 
		\TimeOrDash[1.2]{ColQQLv}      & \CS{ColQQLvTrials}        & \EarlyReturn{ColQQ}        &
		\TimeOrDash[1.2]{ColF2Lv}      & \CS{ColF2LvTrials}        & \EarlyReturn{ColF2}        &
		\TimeOrDash[2.1]{ColF4Lv}      & \CS{ColF4LvTrials}        & \EarlyReturn{ColF4}        &
		\TimeOrDash[1.2]{ColF3Lv}      & \CS{ColF3LvTrials}        & \EarlyReturn{ColF3}        &
		\TimeOrDash[3.0]{ColF9Lv}      & \CS{ColF9LvTrials}        & \EarlyReturn{ColF9}        &
		\TimeOrDash[1.2]{ColF5Lv}      & \CS{ColF5LvTrials}        & \EarlyReturn{ColF5}        &
		\TimeOrDash[3.0]{ColF25Lv}     & \CS{ColF25LvTrials}       & \EarlyReturn{ColF25}       &
		\TimeOrDash[1.2]{ColF7919Lv}   & \CS{ColF7919LvTrials}     & \EarlyReturn{ColF7919}     &
		\TimeOrDash[2.1]{ColF62710561} & \CS{ColF62710561LvTrials} & \EarlyReturn{ColF62710561}
	}
}
\end{table*}

\subsection{Lazy row reduction}\label{subsubsec:lazy}
As mentioned in \cref{sec:fabian's reduction}, we also consider an alternative, lazy algorithm (\cref{alg:lazy reduction}) for computing row echelon forms over fraction fields.
This has an impact on the running time and memory footprint both of the deterministic and the Las Vegas algorithm.
\Cref{tab:timing-graphs} shows in gray the running times and memory footprint of this approach.
Additionally, we report in \cref{tab:timing-surfaces} (also in gray) the running times of this algorithm for selected surface triangulations for the coefficient field $\GF{4}$,
The respective instances in \cref{tab:timing-surfaces} are selected so that the \cref{algo:verify} has to actually execute line~\ref{algo:verify:exit},
i.e., the instance does not have an asterisk in the table.
	
From \cref{tab:timing-graphs}, we see that while computing $\Delta(S)$ deterministically (using $\frX$ or $\frR(w_0)$) noticeably benefits from this algorithm,
the Las Vegas algorithm does not profit consistently from using the lazy reduction.

From \cref{tab:timing-surfaces}, we see that there can be huge differences (in both directions) between the run times of the standard and the lazy variant of the Las Vegas algorithm.
Devising efficient algorithms for computing row echelon forms of matrices over fields of rational functions is a very active area of research on its own,
and developing heuristics to decide which algorithm is most suitable for which instance in the context of algebraic shifting is beyond the scope of this paper.

\section{Concluding remarks} % 1 col
Our results can be summarized as follows.
From a computational point of view, algebraic shifting decomposes into three different regimes.
First, for shifting general hypergraphs and simplicial complexes in characteristic zero, the best option is to use the traditional Monte Carlo algorithm, with few samples.
Secondly, in positive characteristic, the best strategy is to use the Las Vegas algorithm sampling from a finite field extension when the field is small and using either eager or lazy reduction.
Thirdly, combinatorial algorithms for special scenarios like, e.g., the algorithms of Keehn and Nevo \cite{KeehnNevo:2024} for surfaces of low genus and fields of characteristic zero.

We sketch directions for further research.
It is clear that the implementations of our algorithms would benefit from improvements in row reduction algorithms for matrices of multivariate polynomials.
While optimized algorithms of row reducing matrices of univariate polynomials exist \cite{StorjohannVillard05, JeannerodVillard06},
the linear algebra with multivariate polynomials as coefficients requires more research.
Is there a good algorithm for row reduction that exploits the structure of compound matrices?

Apart from improvements in the direction of row reduction, how can the topology or any other properties of the simplicial complex be exploited?
The work of Keehn and Nevo \cite{KeehnNevo:2024} have found ways of excluding certain simplices in the shifted complex depending on the topology.
The computations in \cref{sec:balls-and-spheres} were fast for examples with homology concentrated in the top dimension.
How can this be generalized?

Lastly, when computing the full shift, we use the longest element $w_0$.
We have noticed that it is sometimes sufficient to use a smaller word for a given simplicial complex.
Using a smaller word $w$ would imply less memory consumption since $\frR(w)$ contains fewer indeterminants than $\frR(w_0)$; cf.\ \cref{thm:left-invariance}.
Is there a way to pick such a $w$? Or rather a sequence of $w_i$ such that $\Delta_{\frR(w_t)}( \cdots (\Delta_{\frR(w_1)}(S))\cdots) = \Delta(S)$?

{\sloppy\printbibliography}

@inproceedings {Kalai02,
    AUTHOR = {Kalai, Gil},
     TITLE = {Algebraic shifting},
 BOOKTITLE = {Computational commutative algebra and combinatorics},
 editor = {Hibi, Takayuki},
venue = {Osaka},
eventdate={1999},
series = {Advanced Studies in Pure Mathematics},
    shortseries = {Adv. Stud. Pure Math.},
    number = {33},
     PAGES = {121--163},
 PUBLISHER = {Mathematical Society of Japan},
 shortpublisher = {Math. Soc. Japan},
 location = {Tokyo},
      YEAR = {2002},
      ISBN = {4-931469-17-5},
   MRCLASS = {52B70 (05E25 06A11 13D03 52B22)},
  MRNUMBER = {1890098},
MRREVIEWER = {Alexander\ I.\ Barvinok},
       DOI = {10.2969/aspm/03310121},
}

@article{Kalai84,
    AUTHOR = {Kalai, Gil},
     TITLE = {Characterization of {$f$}-vectors of families of convex sets
              in {${\bf R}^d$}. {I}. {N}ecessity of {E}ckhoff's conditions},
   JOURNAL = {Israel J. Math.},
  FJOURNAL = {Israel Journal of Mathematics},
    VOLUME = {48},
      YEAR = {1984},
    NUMBER = {2-3},
     PAGES = {175--195},
      ISSN = {0021-2172},
   MRCLASS = {52A20 (52A37)},
  MRNUMBER = {770700},
MRREVIEWER = {P.\ McMullen},
       DOI = {10.1007/BF02761163},
       URL = {https://doi.org/10.1007/BF02761163},
}

@article{Kalai86,
    AUTHOR = {Kalai, Gil},
     TITLE = {Characterization of {$f$}-vectors of families of convex sets
              in {${\bf R}^d$}. {II}. {S}ufficiency of {E}ckhoff's
              conditions},
   JOURNAL = {J. Combin. Theory Ser. A},
  FJOURNAL = {Journal of Combinatorial Theory. Series A},
    VOLUME = {41},
      YEAR = {1986},
    NUMBER = {2},
     PAGES = {167--188},
      ISSN = {0097-3165,1096-0899},
   MRCLASS = {52A20},
  MRNUMBER = {834268},
MRREVIEWER = {P.\ McMullen},
       DOI = {10.1016/0097-3165(86)90079-8},
       URL = {https://doi.org/10.1016/0097-3165(86)90079-8},
}

@article{Kalai90,
    AUTHOR = {Kalai, Gil},
     TITLE = {Symmetric matroids},
   JOURNAL = {J. Combin. Theory Ser. B},
  FJOURNAL = {Journal of Combinatorial Theory. Series B},
    VOLUME = {50},
      YEAR = {1990},
    NUMBER = {1},
     PAGES = {54--64},
      ISSN = {0095-8956,1096-0902},
   MRCLASS = {05B35},
  MRNUMBER = {1070465},
MRREVIEWER = {J.\ M. S. Sim\~oes-Pereira},
       DOI = {10.1016/0095-8956(90)90096-I},
       URL = {https://doi.org/10.1016/0095-8956(90)90096-I},
}

@article {Aramova+Herzog:2000,
    AUTHOR = {Aramova, Annetta and Herzog, J\"urgen},
     TITLE = {Almost regular sequences and {B}etti numbers},
   JOURNAL = {Amer. J. Math.},
  FJOURNAL = {American Journal of Mathematics},
    VOLUME = {122},
      YEAR = {2000},
    NUMBER = {4},
     PAGES = {689--719},
      ISSN = {0002-9327},
   MRCLASS = {13F55 (13D02)},
  MRNUMBER = {1771569},
MRREVIEWER = {Rafael\ H.\ Villarreal},
eprinttype = {jstor},
eprint = {25099009}
}

@article {BjornerKalai:1988,
    AUTHOR = {Bj\"orner, Anders and Kalai, Gil},
     TITLE = {An extended {E}uler-{P}oincar\'e{} theorem},
   JOURNAL = {Acta Math.},
  FJOURNAL = {Acta Mathematica},
    VOLUME = {161},
      YEAR = {1988},
    NUMBER = {3-4},
     PAGES = {279--303},
      ISSN = {0001-5962,1871-2509},
   MRCLASS = {52A37 (51H99 55N99)},
  MRNUMBER = {971798},
MRREVIEWER = {Sergey\ Yuzvinsky},
       DOI = {10.1007/BF02392300},
       URL = {https://doi.org/10.1007/BF02392300},
}

@incollection{Frankl:1987,
    AUTHOR = {Frankl, Peter},
     TITLE = {The shifting technique in extremal set theory},
 BOOKTITLE = {Surveys in combinatorics 1987 ({N}ew {C}ross, 1987)},
    SERIES = {London Math. Soc. Lecture Note Ser.},
    number = {123},
     PAGES = {81--110},
 PUBLISHER = {Cambridge Univ. Press, Cambridge},
      YEAR = {1987},
      ISBN = {0-521-34805-6},
   MRCLASS = {05A05},
  MRNUMBER = {905277},
MRREVIEWER = {E.\ C.\ Milner},
}

@article{ErdosKoEtAl:1961,
    AUTHOR = {Erd\H os, P. and Ko, Chao and Rado, R.},
     TITLE = {Intersection theorems for systems of finite sets},
   JOURNAL = {Quart. J. Math. Oxford Ser. (2)},
  FJOURNAL = {The Quarterly Journal of Mathematics. Oxford. Second Series},
    VOLUME = {12},
      YEAR = {1961},
     PAGES = {313--320},
      ISSN = {0033-5606,1464-3847},
   MRCLASS = {04.60},
  MRNUMBER = {140419},
MRREVIEWER = {S.\ Ginsburg},
       DOI = {10.1093/qmath/12.1.313},
       URL = {https://doi.org/10.1093/qmath/12.1.313},
}

@misc{KeehnNevo:2024,
  author = {Keehn, Aaron and Nevo, Eran},
  title =  {Exterior Shifting of Low Genus Surfaces},
  year =   2024,
  eprint = {2405.12758},
  archivePrefix={arXiv},
  primaryClass={math.CO}
}

@article{Nevo:2005,
    AUTHOR = {Nevo, Eran},
     TITLE = {Algebraic shifting and basic constructions on simplicial
              complexes},
   JOURNAL = {J. Algebraic Combin.},
  FJOURNAL = {Journal of Algebraic Combinatorics. An International Journal},
    VOLUME = {22},
      YEAR = {2005},
    NUMBER = {4},
     PAGES = {411--433},
      ISSN = {0925-9899,1572-9192},
   MRCLASS = {05E25 (13F55)},
  MRNUMBER = {2191645},
MRREVIEWER = {Jakob\ Jonsson},
       DOI = {10.1007/s10801-005-4626-0},
       URL = {https://doi.org/10.1007/s10801-005-4626-0},
}

@book{OSCAR-book,
  editor = {Decker, Wolfram and Eder, Christian and Fieker, Claus and Horn, Max and Joswig, Michael},
  title = {The computer algebra system {\OSCAR}: algorithms and examples},
  publisher = {Springer},
  series = {Algorithms and {C}omputation in {M}athematics},
  number = 32,
  year = {2025},
  doi = {10.1007/978-3-031-62127-7},
}

@article {Murai+Hibi:2009,
    AUTHOR = {Murai, Satoshi and Hibi, Takayuki},
     TITLE = {Algebraic shifting and graded {B}etti numbers},
   JOURNAL = {Trans. Amer. Math. Soc.},
  FJOURNAL = {Transactions of the American Mathematical Society},
    VOLUME = {361},
      YEAR = {2009},
    NUMBER = {4},
     PAGES = {1853--1865},
      ISSN = {0002-9947,1088-6850},
   MRCLASS = {13D02 (13F55)},
  MRNUMBER = {2465820},
MRREVIEWER = {Christopher\ A.\ Francisco},
       DOI = {10.1090/S0002-9947-08-04707-7},
       URL = {https://doi.org/10.1090/S0002-9947-08-04707-7},
}

@misc{VecchiaJoswigLenzen:2024,
	title={Partial Algebraic Shifting},
	author={Della Vecchia, Antony and Joswig, Michael and Lenzen, Fabian},
	year=2024,
	eprint={2410.24044},
	archivePrefix={arXiv},
}

@inproceedings{DellaVecchiaJoswigEtAl:2025,
	title = {Faster {{Algebraic Shifting}}},
	booktitle = {Proceedings of the 2025 {{International Symposium}} on {{Symbolic}} and {{Algebraic Computation}}},
	eventdate = {2025-07-28/2025-08-01},
	venue = {Guanajuato},
	author = {Della Vecchia, Antony and Joswig, Michael and Lenzen, Fabian},
	date = {2025-11-10},
	pages = {197--205},
	publisher = {Association for Computing Machinery},
	location = {New York, NY},
	doi = {10.1145/3747199.3747562},
	isbn = {979-8-4007-2075-8}
}

@software{Oscar,
	key          = {OSCAR},
	organization = {The {\OSCAR} Team},
	title        = {{\OSCAR}},
	subtitle = {Open Source Computer Algebra Research system},
	version = {1.7.0},
	year         = {2026},
	url          = {https://www.oscar-system.org},
}

@Misc{M2,
  author = {Grayson, Daniel R. and Stillman, Michael E.},
  title = {{\Macaulay}, a software system for research in algebraic geometry},
  howpublished = {Available at \url{http://www2.macaulay2.com}}
}

@online{Lutz:surfaces,
	author={Lutz, Frank H.},
	title={The Manifold Page},
	section={Surfaces},
	url={https://www3.math.tu-berlin.de/IfM/Nachrufe/Frank_Lutz/stellar},
	urldate={2025-01-06},
	date={2011-10-17}
}

@incollection {DMV:polymake,
    AUTHOR = {Gawrilow, Ewgenij and Joswig, Michael},
     TITLE = {\polymake: a framework for analyzing convex polytopes},
 BOOKTITLE = {Polytopes---combinatorics and computation (Oberwolfach, 1997)},
    SERIES = {DMV Sem.},
    number = {29},
     PAGES = {43--73},
 PUBLISHER = {Birk\-h\"au\-ser},
   ADDRESS = {Basel},
      YEAR = {2000},
   MRCLASS = {52B55 (68U05)},
  MRNUMBER = {MR1785292 (2001f:52033)},
}

@Book{Hatcher2002,
 Author = {Hatcher, Allen},
 Title = {Algebraic topology},
 ISBN = {0-521-79540-0},
 Year = 2002,
 Publisher = {Cambridge: Cambridge University Press},
 Keywords = {55-01,55Nxx,55Pxx,55Qxx},
 zbMATH = 2103273,
 Zbl = {1044.55001}
}

@article {Bjoerner+Lutz:2000,
    AUTHOR = {Bj\"orner, Anders and Lutz, Frank H.},
     TITLE = {Simplicial manifolds, bistellar flips and a 16-vertex
              triangulation of the {P}oincar\'e{} homology 3-sphere},
   JOURNAL = {Experiment. Math.},
  FJOURNAL = {Experimental Mathematics},
    VOLUME = {9},
      YEAR = {2000},
    NUMBER = {2},
     PAGES = {275--289},
      ISSN = {1058-6458,1944-950X},
   MRCLASS = {57Q15 (57-04 57M15 57Q25)},
  MRNUMBER = {1780212},
MRREVIEWER = {Alberto\ Cavicchioli},
       eprinttype = {projecteuclid},
       eprint = {em/1045952351}
}

@software{BenchmarksRepo,
author = {Della Vecchia, Antony and Joswig, Michael and Lenzen, Fabian},
title = {{AlgebraicShiftingBenchmarks}},
url = {https://github.com/dmg-lab/AlgebraicShiftingBenchmarks}
}

@software{M2ExtShifting,
author = {Keehn, Aaron},
title = {{ext-shifting}},
url = {https://github.com/ank1494/ext-shifting}
}

@inproceedings{DellaVecchiaJoswigLorenz:2024,
  author = {Della Vecchia, Antony and Joswig, Michael and Lorenz, Benjamin},
  title =  {A {FAIR} file format for mathematical software},
  editor = {Buzzard, Kevin and Dickenstein, Alicia and Eick, Bettina and Leykin, Anton and Ren, Yue},
  booktitle = {Mathematical software -- ICMS 2024},
  series = {Lecture Notes in Computer Science},
  number = {14749},
  publisher = {Springer},
  year =   {2024},
  pages = {234-244},
  arxiv =   {2309.00465},
  doi = {10.1007/978-3-031-64529-7_25},
}

@article{BulavkaNevoPeled25,
    AUTHOR = {Bulavka, Denys and Nevo, Eran and Peled, Yuval},
     TITLE = {Volume rigidity and algebraic shifting},
   JOURNAL = {J. Combin. Theory Ser. B},
  FJOURNAL = {Journal of Combinatorial Theory. Series B},
    VOLUME = {170},
      YEAR = {2025},
     PAGES = {189--202},
      ISSN = {0095-8956,1096-0902},
   MRCLASS = {52C25 (05E45 57Q05 57Q15)},
  MRNUMBER = {4801499},
MRREVIEWER = {Brigitte\ Servatius},
       DOI = {10.1016/j.jctb.2024.09.002},
       URL = {https://doi.org/10.1016/j.jctb.2024.09.002},
}

@article {altshuler,
    AUTHOR = {Altshuler, A.},
     TITLE = {A peculiar triangulation of the {$3$}-sphere},
   JOURNAL = {Proc. Amer. Math. Soc.},
  FJOURNAL = {Proceedings of the American Mathematical Society},
    VOLUME = {54},
      YEAR = {1976},
     PAGES = {449--452},
      ISSN = {0002-9939,1088-6826},
   MRCLASS = {57C15},
  MRNUMBER = {397744},
MRREVIEWER = {John\ Bryant},
       DOI = {10.2307/2040839},
       URL = {https://doi.org/10.2307/2040839},
}

@article{JeannerodVillard06,
    AUTHOR = {Jeannerod, C.-P. and Villard, G.},
     TITLE = {Asymptotically fast polynomial matrix algorithms for
              multivariate systems},
   JOURNAL = {Internat. J. Control},
  FJOURNAL = {International Journal of Control},
    VOLUME = {79},
      YEAR = {2006},
    NUMBER = {11},
     PAGES = {1359--1367},
      ISSN = {0020-7179,1366-5820},
   MRCLASS = {93B40 (65F30)},
  MRNUMBER = {2256778},
       DOI = {10.1080/00207170600726477},
       URL = {https://doi.org/10.1080/00207170600726477},
}

@incollection {StorjohannVillard05,
    AUTHOR = {Storjohann, Arne and Villard, Gilles},
     TITLE = {Computing the rank and a small nullspace basis of a polynomial
              matrix},
 BOOKTITLE = {I{SSAC}'05},
     PAGES = {309--316},
 PUBLISHER = {ACM, New York},
      YEAR = {2005},
      ISBN = {1-59593-095-7},
   MRCLASS = {68W30 (15A03)},
  MRNUMBER = {2280562},
       DOI = {10.1145/1073884.1073927},
       URL = {https://doi.org/10.1145/1073884.1073927},
}

% Local Variables:
% mode: LaTeX
% TeX-master: "jsc_version"
% End:

\end{document}